\newtheorem{theorem}{Theorem}[section]
\newtheorem{corollary}[theorem]{Corollary}
\newtheorem{lemma}[theorem]{Lemma}
\newtheorem{remark}[theorem]{Remark}
\title{Bounding the Porous Exponential Domination Number of Apollonian Networks \footnote{Research partially funded by CURM, the Center for Undergraduate Research, and NSF grant DMS-1148695}}
\author{Joshua Beverly \qquad Mariah Farley \qquad Christopher McClain \qquad Felicia Stover \\Concord University}
\begin{document}

\flushbottom
\maketitle

\begin{center}
\small MSC: 05C12, 05C69\\
\small Key words: graph theory, domination, Apollonian network\\
\end{center}

\begin{abstract}
Given a graph $G$ with vertex set $V$, a subset $S$ of $V$ is a dominating set if every vertex in $V$ is either in $S$ or adjacent to some vertex in $S$. The size of a smallest dominating set is called the domination number of $G$. We study a variant of domination called porous exponential domination in which each vertex $v$ of $V$ is assigned a weight by each vertex $s$ of $S$ that decreases exponentially as the distance between $v$ and $s$ increases. $S$ is a porous exponential dominating set for $G$ if all vertices in $S$ distribute to vertices in $G$ a total weight of at least 1. The porous exponential domination number of $G$ is the size of a smallest porous exponential dominating set. In this paper we compute bounds for the porous exponential domination number of special graphs known as Apollonian networks.
\end{abstract}

\section{Introduction}

Exponential domination was first introduced in \cite{Dankelmann} and further studied in \cite{Anderson}. Apollonian networks and their applications were independently introduced in \cite{Andrade} and \cite{Doye}, and further studied in \cite{Zhang2006} and \cite{Zhang2014}. We refer the reader to \cite{HaynesDom} and \cite{HaynesFund} for a comprehensive treatment of the topic of domination in graphs and its many variants. General graph theoretic notation and terminology may be found in \cite{West}. Given a graph $G$, we denote its set of vertices by $V(G)$ and its set of edges by $E(G)$. The degree of a vertex $v$ in $G$ is denoted by $d_G(v)$. The distance in $G$ between vertices $x$ and $y$, denoted by $d_G(x,y)$, is defined to be the length of a shortest path in $G$ that joins $x$ and $y$, if such a path exists, and infinity otherwise. The diameter of $G$, denoted $diam(G)$, is the largest such distance: $diam(G)=\max \{d_G(x,y) \mid x,y \in V(G)\}$.

Let $G$ be a graph, $S \subseteq V(G)$, and $v \in V(G)$. The \textit{porous exponential domination weight} of $S$ at $v$ is \[w^*_S(v)=\sum_{u \in S} \frac{1}{2^{d(u,v)-1}}\] and $S$ is a \textit{porous exponential dominating set} for $G$ if $w^*_S(v)\geq 1$ for all $v \in V(G)$. The size of a smallest porous exponential dominating set for $G$ is the \textit{porous exponential domination number} of $G$. and is denoted by 
$\gamma_e^*(G)$. These definitions were first introduced in \cite{Dankelmann}, although that paper is primarily concerned with another variant, $\gamma_e(G)$, called the \textit{nonporous exponential domination number} of $G$. The key difference between porous exponential domination and nonporous exponential domination is whether the distribution of weights from $S$ may ``pass through'' other vertices in $S$, as is evidenced by the slightly different definition of nonporous weight:
\[w_S(v) = 
     \begin{cases}
          \displaystyle \sum_{u \in S} \frac{1}{2^{f(u,v)-1}} & \mbox{if } v \not\in S\\
          2 & \mbox{if } v \in S
     \end{cases}
\]
where $f(u,v)$ is defined to be the length of a shortest path joining $u$ and $v$ in the subgraph induced by $V(G) \setminus (S \setminus \{u\})$ if such a path exists, and infinity otherwise. It is clear that $\gamma_e^*(G) \leq \gamma_e(G)$.

Having defined porous exponential domination, we now define Apollonian networks. Let $G_1$ be a complete graph on three vertices and let $U_1$ = $V(G_1)$. Let $G_2$ be a complete graph on four vertices such that $U_1$ $\subseteq$ $V(G_2)$, and let $U_2 = V(G_2) \backslash V(G_1)$. For $k > 2$ we define $G_k$ and $U_k$ recursively by extending $G_{k-1}$ and $U_{k-1}$ as follows: for each $u \in U_{k-1}$, and for each adjacent pair $\{x,y\} $ of neighbors of $u$ in $G_{k-1}$, we create a new vertex $v \in U_k$ that is adjacent to each of $u,x,y$ in $G_k$. (Consequently, $u$, $v$, $x$, and $y$ are all pairwise adjacent in $G_k$.) We call $G_k$ the \textit{kth Apollonian network}, and for $1 \leq j \leq k$, we call $U_j$ the \textit{jth generation} of vertices in $G_k$. Note that $V(G_k) = \bigcup_{j=1}^{k} U_j$ and $U_k = V(G_k) \setminus V(G_{k-1})$. This recursive process is more easily visualized by starting with a particular planar embedding of $G_1$ and obtaining $G_k$ from $G_{k-1}$ by adding a new vertex to each interior face and triangulating, as shown in Figures \ref{fig:G1} through \ref{fig:G4}. We note, however, that our formal definition above does not depend upon the planar embedding.

\begin{figure}[!htb]
\minipage{0.5\textwidth}
  \includegraphics[width=\linewidth]{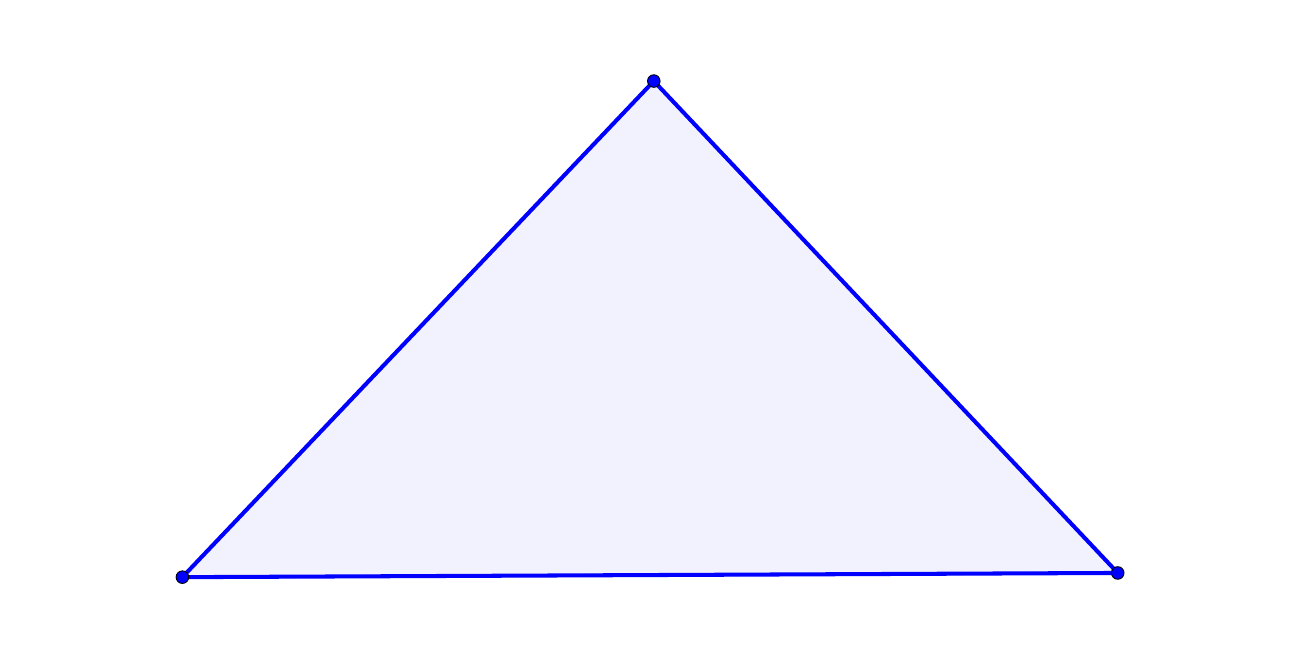}
  \caption{$G_1$}\label{fig:G1}
\endminipage\hfill
\minipage{0.5\textwidth}
  \includegraphics[width=\linewidth]{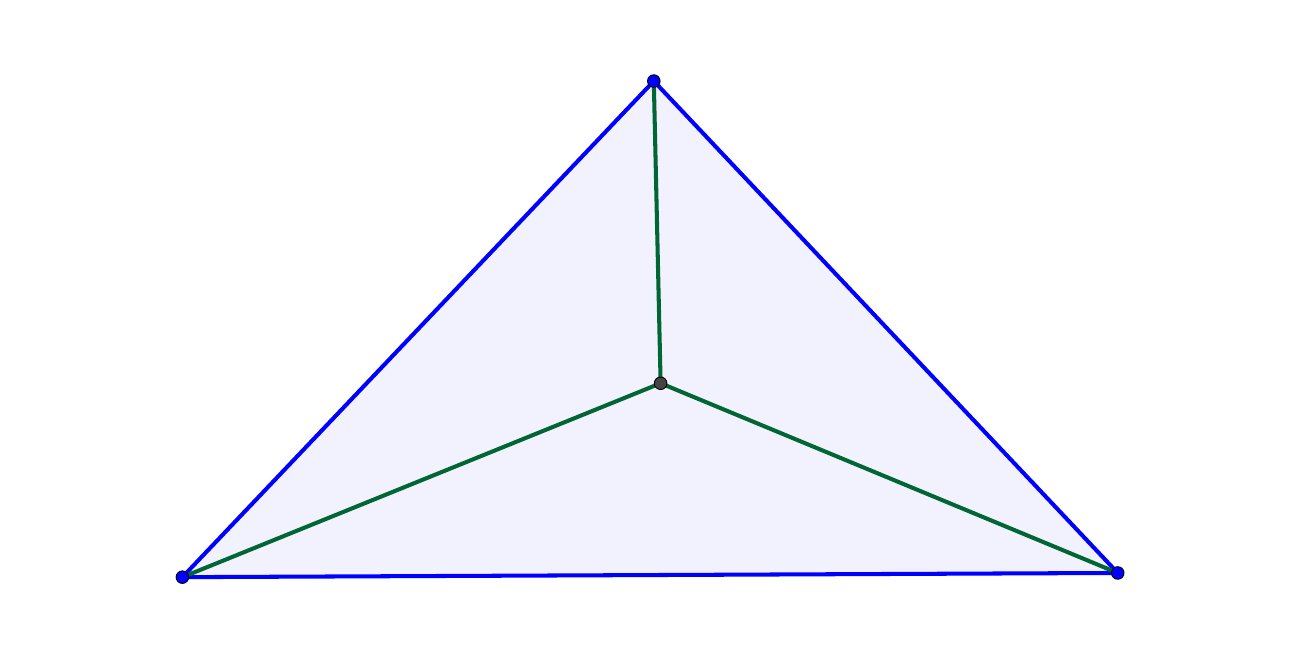}
  \caption{$G_2$}\label{fig:G2}
\endminipage\hfill
\end{figure}

\begin{figure}[!htb]
\minipage{0.5\textwidth}
  \includegraphics[width=\linewidth]{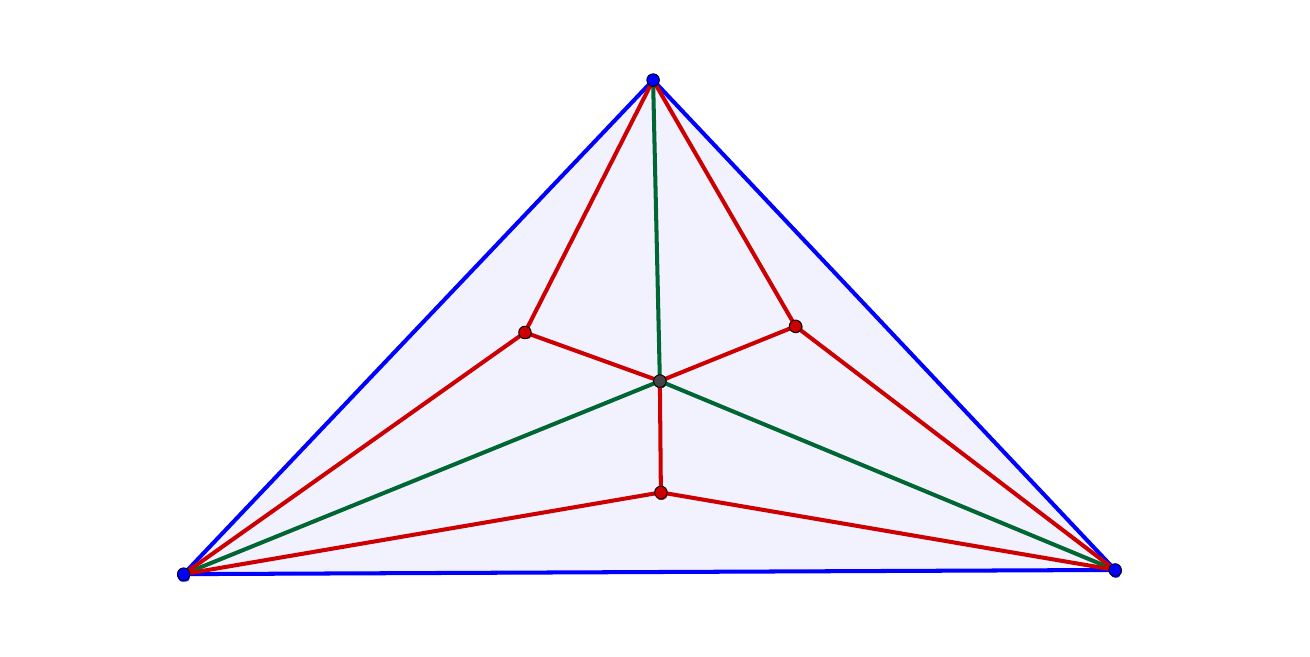}
  \caption{$G_3$}\label{fig:G3}
\endminipage\hfill
\minipage{0.5\textwidth}
  \includegraphics[width=\linewidth]{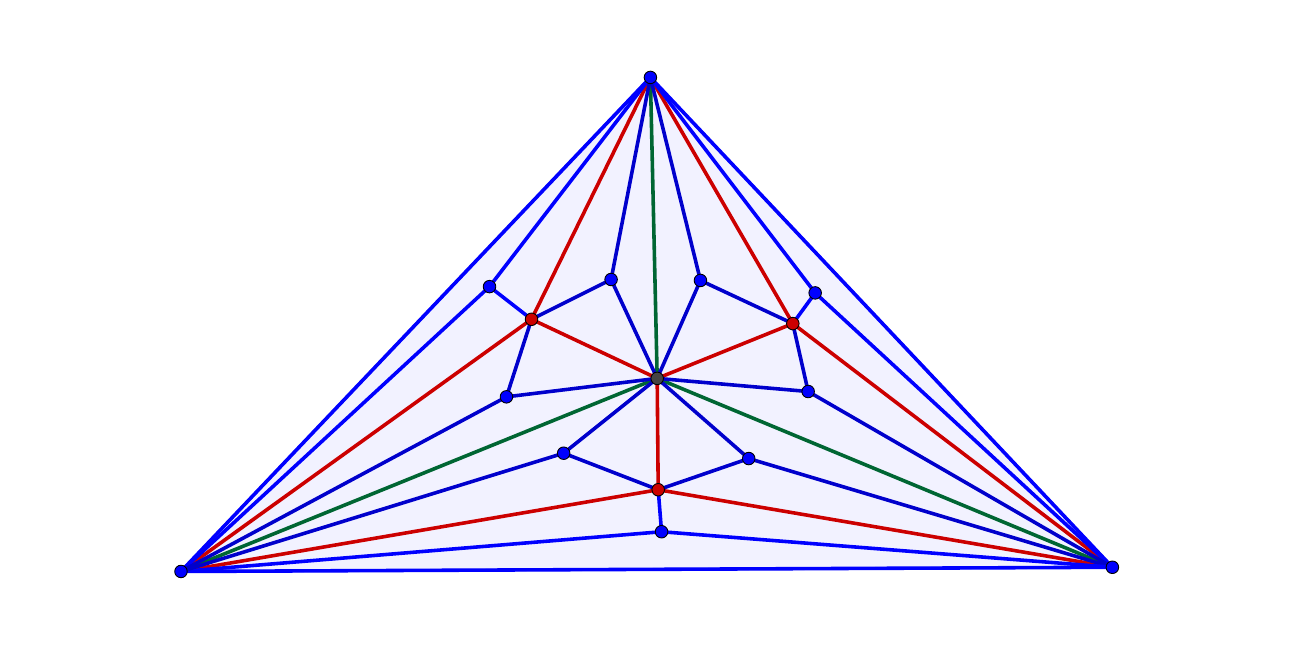}
  \caption{$G_4$}\label{fig:G4}
\endminipage\hfill
\end{figure}

Before stating our main results, we record a few elementary facts based upon our construction of $G_k$ and observation of small cases:

\begin{remark}
\label{Vertices}
$|U_1|=3$, $|U_k|=3^{k-2}$ for $k>1$, and $ \displaystyle |V(G_k)|=\sum_{j=1}^k U_j=3+\sum_{j=0}^{k-2} 3^j=\frac{3^{k-1}+5}{2}$.
\end{remark}

\begin{remark}
\label{Edges}
$\displaystyle |E(G_k)|=3+\sum_{j=2}^{k}3|U_j|=3+\sum_{j=2}^{k} 3^{j-1}=\frac{3^k+3}{2}$.
\end{remark}

\begin{remark}
\label{gammaG3}
Since every vertex in $V(G_3)$ is adjacent to the single vertex in $U_2$, we know that $\gamma_e^*(G_3)=1$.
\end{remark}

\begin{remark}
\label{gammaG5}
Let $S$ be any pair of vertices from $V(G_2)$. Since every vertex in $V(G_5)$ is adjacent to at least one of the vertices in $V(G_2)$ and every pair of vertices in $V(G_2)$ is adjacent, we know that every vertex of $V(G_5)$ is within distance 2 of both vertices in $S$ and therefore $\gamma_e^*(G_5)=2$. (See Figure \ref{fig:G5}.)
\end{remark}

\begin{figure}
  \begin{center}
  \includegraphics[scale=1.2]{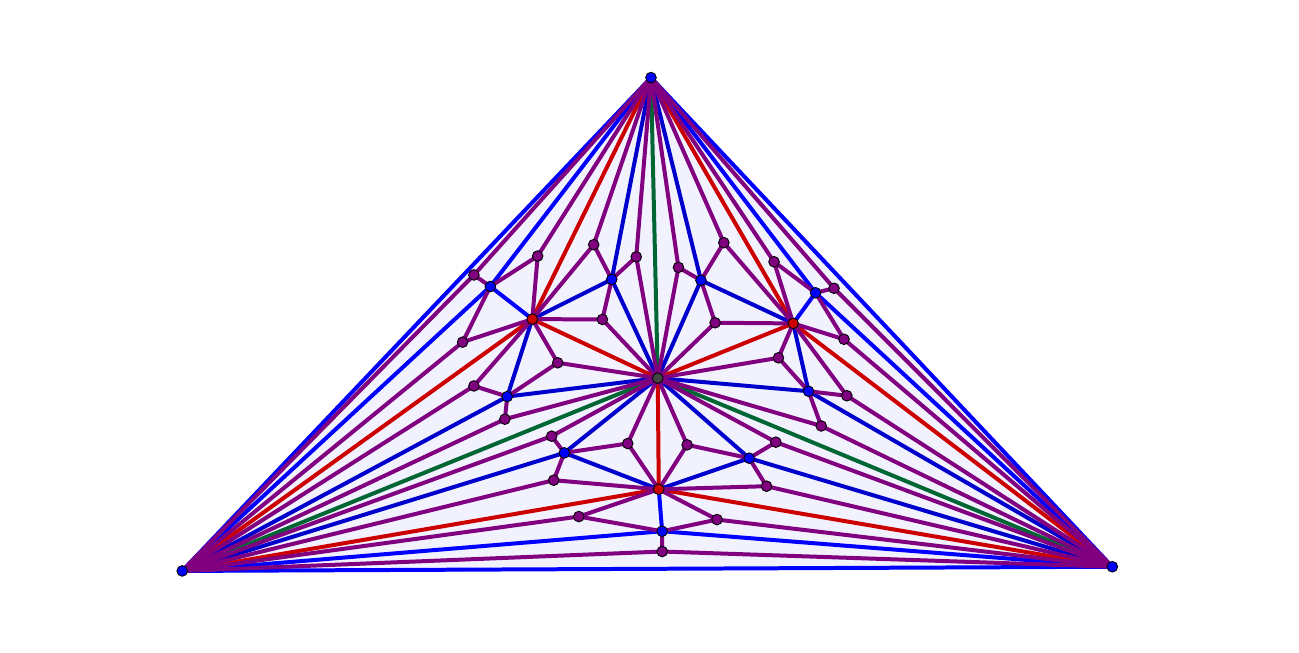}
  \caption{$G_5$}\label{fig:G5}
  \end{center}
\end{figure}

We further invite the reader to verify our observations and computations for the order, diameter, and porous exponential domination number of $G_k$ for $k \leq 7$, as presented in Table \ref{GkSmall} below.

\begin{table}
\begin{center}
\begin{tabular}{|c|c|c|c|c|}

\hline
$k$ & $|V(G_k)|$ & $|E(G_k)|$ & $diam(G_k)$ & $\gamma_e^*(G_k)$ \\
\hline
$1$ & 3 & 3 & 1 & 1 \\
$2$ & 4 & 6 & 1 & 1 \\
$3$ & 7 & 15 & 2 & 1 \\
$4$ & 16 & 42 & 3 & 2 \\
$5$ & 43 & 123 & 3 & 2 \\
$6$ & 124 & 366 & 4 & 3 \\
$7$ & 367 & 1095 & 5 & 3 \\
\hline

\end{tabular}
\end{center}
\caption{Observations for $G_k$, $k \leq 7$}
\label{GkSmall}
\end{table}

\section{Main Results}

In Remark \ref{gammaG5} we compute $\gamma_e^*(G_5)=2$ by observation, but as $k$ increases, the number of vertices increases exponentially and $\gamma_e^*$ becomes increasingly difficult to compute by brute force. Thus, our main results in this paper are upper and lower bounds for $\gamma_e^*(G_k)$. For all $k \geq 6$ we show that $U_{k-3}$ is a porous exponential dominating set for $G_k$, which proves the following:

\begin{theorem}
\label{UBgeq6}
For $k \geq 6$, $\gamma_e^*(G_k) \leq 3^{k-5}$.
\end{theorem}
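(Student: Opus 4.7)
The plan is to show that $U_{k-3}$ is a porous exponential dominating set of $G_k$; the bound then follows from $|U_{k-3}| = 3^{k-5}$ given in Remark~\ref{Vertices}. The core work is to verify $w^*_{U_{k-3}}(v) \geq 1$ for every $v \in V(G_k)$.

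First I would pin down the face structure of $G_{k-3}$. Because each subdivision step only adds vertices inside existing interior triangular faces, each interior face $T$ of $G_{k-3}$ has exactly one corner in $U_{k-3}$ (the vertex inserted when subdividing the parent face of $T$ in $G_{k-4}$) and two corners in $V(G_{k-4})$; the outer face $V(G_1)$ is never subdivided. Every vertex in $U_{k-2} \cup U_{k-1} \cup U_k$ lies strictly inside a unique such $T$. Three further rounds of subdivision inside $T$ insert $1 + 3 + 9 = 13$ new vertices, and a direct enumeration of this fixed configuration shows each of the 13 is at distance at most $2$ from each corner of $T$.

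I would then split into three cases. In Case~1, $v \in U_{k-3}$ contributes weight $2$ to itself. In Case~2, $v \in V(G_{k-4})$; the $U_{k-3}$ neighbors of $v$ correspond bijectively to the interior faces of $G_{k-4}$ at $v$, and since $d_{G_{k-4}}(v) \geq 3$ with at most one incident face being the outer face, $v$ has at least two $U_{k-3}$ neighbors, contributing weight at least $2$. In Case~3, $v \in U_{k-2} \cup U_{k-1} \cup U_k$ lies inside a face $T = \{u, b, c\}$ with $u \in U_{k-3}$ and $d(v, u) \leq 2$; if $d(v,u) = 1$, then $u$ alone supplies weight $1$.

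The main obstacle is the sub-case $d(v,u) = 2$ of Case~3, since $u$ alone contributes only $\frac{1}{2}$. The key geometric fact, read off from the setup enumeration, is that every one of the six interior vertices of $T$ at distance $2$ from $u$ is adjacent to $b$ or $c$. Choosing such a corner (say $b$) and invoking the count from Case~2 applied to $b$, I obtain a $U_{k-3}$ neighbor $u'$ of $b$ distinct from $u$; then $d(v, u') \leq 2$, so $u$ and $u'$ together give weight at least $\frac{1}{2} + \frac{1}{2} = 1$. Combining the three cases shows that $U_{k-3}$ is a porous exponential dominating set, and hence $\gamma_e^*(G_k) \leq 3^{k-5}$.
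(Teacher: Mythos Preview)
Your proof is correct and uses the same dominating set $S=U_{k-3}$ and the same underlying idea as the paper: a vertex not adjacent to $U_{k-3}$ must have a neighbor $n\in V(G_{k-4})$, and any such $n$ has at least two neighbors in $U_{k-3}$, putting $v$ within distance~$2$ of two members of $S$.

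The packaging differs. The paper splits by generation and handles the hard case ($v\in U_{k-1}\cup U_k$ with no neighbor in $S$) by invoking Corollary~\ref{G_{k-3} neighbors} to produce a neighbor $n\in V(G_{k-4})$ and then Corollary~\ref{P size} to see that $n$ has at least two $U_{k-3}$ neighbors. You instead locate $v$ inside a fixed interior face $T=\{u,b,c\}$ of $G_{k-3}$ with $u\in U_{k-3}$, $b,c\in V(G_{k-4})$, and enumerate the $13$ interior vertices of the triply subdivided triangle to read off that $d(v,u)\le 2$, and that when $d(v,u)=2$ one of $b,c$ is already a neighbor of $v$. Your face-incidence count for Case~2 (degree $\ge 3$, at most one outer face) is exactly what Corollary~\ref{P size} gives combinatorially. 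So the two arguments coincide at the level of ideas; the paper's is shorter because the lemmas are already in hand, while yours is more self-contained and makes the local geometry explicit.
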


We can improve upon this bound for $k \geq 11$ by constructing a porous exponential dominating set using all of the vertices of a smaller Apollonian network rather than just a generation. In particular, we dominate $G_k$ with $V(G_{k-7})$ and prove the following:

\begin{theorem}
\label{UBgeq10}
For $k \geq 10$, $\gamma_e^*(G_k) \leq \frac{3^{k-8}+5}{2}$.
\end{theorem}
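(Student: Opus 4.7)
The plan is to show that $S = V(G_{k-7})$ is itself a porous exponential dominating set for $G_k$ whenever $k \geq 10$. By Remark \ref{Vertices} we have $|V(G_{k-7})|=\frac{3^{k-8}+5}{2}$, so once this is established the theorem follows at once; the task reduces to proving $w^*_S(v) \geq 1$ for every $v \in V(G_k)$.

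Each $v \in S$ contributes $2$ to $w^*_S(v)$ on its own and is handled trivially, so I focus on $v \in V(G_k) \setminus V(G_{k-7}) = U_{k-6} \cup \cdots \cup U_k$, processing these generation by generation. The main structural tool is that every $v \in U_j$ (for $j \geq 3$) was added adjacent to three mutually adjacent vertices in $G_{j-1}$: a ``parent'' $p(v) \in U_{j-1}$ and two ``grandparents'' $g_1(v), g_2(v) \in V(G_{j-2})$. Iterating the grandparent map descends two generations per graph step, so $v \in U_j$ lies within distance $\lceil (j-(k-7))/2 \rceil \leq 4$ of $V(G_{k-7})$.

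The earliest generations $j \in \{k-6, k-5, k-4, k-3\}$ should be handled directly. For $j = k-6$ all three of $p(v), g_1(v), g_2(v)$ lie in $S$ and give $w^*_S(v) \geq 3$; for $j = k-5$ the two grandparents already lie in $S$ and give $w^*_S(v) \geq 2$. For $j = k-4$ and $j = k-3$, a one-level unfolding of the grandparent recursion suffices: whenever a grandparent of $v$ lies outside $S$, its own two grandparents lie in $S$ and contribute $2 \cdot \tfrac{1}{2} = 1$ of weight from distance $2$. A short case analysis in each subcase confirms $w^*_S(v) \geq 1$.

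The main obstacle is the last three generations $j \in \{k-2, k-1, k\}$, and especially $j = k$, where the nearest vertex of $S$ can be as far as distance $4$ and so contribute only $\tfrac{1}{8}$. My plan is to unfold the full binary tree of grandparent descents rooted at $v$ down to depth $4$, record at which depth each branch first enters $V(G_{k-7})$, and sum $\tfrac{1}{2^{d-1}}$ over the distinct entry points. Because each grandparent step produces two new ancestors, the count of ancestors at depth $d$ grows like $2^d$, which offsets the $\tfrac{1}{2^{d-1}}$ weight decay. The delicate part is overlap between branches: distinct grandparent paths from $v$ can terminate at the same vertex of $S$, and this must be controlled either by tracking the rooted ancestor structure injectively or by lower-bounding the guaranteed number of distinct ancestors per depth. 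The inequality is tightest at $k = 10$, where $|S| = |V(G_3)| = 7$ leaves essentially no slack; I would check this base case directly, after which the exponential growth of $|V(G_{k-7})|$ for $k > 10$ gives ample margin.
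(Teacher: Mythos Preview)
Your choice $S = V(G_{k-7})$ and the broad case split match the paper, and your treatment of $j \in \{k-6,\dots,k-3\}$ would go through with minor care. The genuine gap is in the last three generations, precisely where you flag it.

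The paper does not iterate a two-generation grandparent map. It uses Corollary~\ref{G_{k-3} neighbors} to drop three generations per hop: from $v \in U_k$ one finds a neighbor $n \in V(G_{k-3})$, then a neighbor $w$ of $n$ in $V(G_{k-6})$. The only hard subcase is $w \in U_{k-6}$, and there the paper switches direction and counts \emph{forward}: $w$ has three pairwise-adjacent neighbors $x_1,x_2,x_3 \in S$, and by Corollary~\ref{P size} the one of smallest generation, $x_3$, has many neighbors in the single generation $U_{k-7} \subseteq S$---enough to supply three further distinct vertices at distance $4$ from $v$, giving $3\cdot\tfrac14 + 3\cdot\tfrac18 \geq 1$. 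Distinctness is guaranteed by this forward count of late-generation neighbors of one early vertex, not by backward branching.

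Your backward binary tree has no such guarantee, and you correctly identify overlap as the obstacle---but you do not resolve it. The two grandparents $g_1,g_2$ of $v$ are themselves adjacent, so unfolding $g_1$'s own grandparents can recover $g_2$ or its ancestors, collapsing branches. Your fallback---check $k=10$ by hand, then appeal to ``exponential growth of $|V(G_{k-7})|$'' for $k>10$---is not valid reasoning: $w^*_S(v)$ depends only on the vertices of $S$ within distance~$4$ of $v$, and that local picture does not automatically improve just because $|S|$ is globally larger. A vertex $v \in U_{k}$ for large $k$ can have the same ancestor pattern relative to $V(G_{k-7})$ that some $v' \in U_{10}$ has relative to $V(G_3)$. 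To salvage your route you would need either an explicit bound on overlap in the grandparent tree or a self-similarity reduction from general $k$ to $k=10$; neither is in the proposal.
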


To establish a lower bound, we apply a theorem from \cite{Dankelmann} that bounds $\gamma_e^*(G)$ from below in terms of $diam(G)$. In order to do this, we compute $diam(G_k)$ for all $k$. This establishes the following:

\begin{theorem}
\label{LB}
{For all $k \in \mathbb{N}$,  $\gamma_e^*(G_k) \geq $ $\left \lceil \frac{2k +5}{12} \right \rceil $}.
\end{theorem}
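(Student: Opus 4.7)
The bound is linear in $k$, and the right tool (as the paper signals) is a diameter-based lower bound from \cite{Dankelmann}, roughly of the form $\gamma_e^*(G) \geq (\mathrm{diam}(G)+2)/4$. My plan is therefore: (i) compute $\mathrm{diam}(G_k)$ in closed form, (ii) substitute into Dankelmann's bound, and (iii) simplify the resulting ceiling to $\lceil (2k+5)/12 \rceil$.

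The heart of the proof is step (i). The values $1,1,2,3,3,4,5$ from Table \ref{GkSmall} suggest $\mathrm{diam}(G_k) = \lfloor (2k+1)/3 \rfloor$, equivalently that the diameter gains exactly $2$ every three generations. I would prove this by induction in steps of $3$ via the recurrence $\mathrm{diam}(G_k) = \mathrm{diam}(G_{k-3}) + 2$ for $k \geq 4$, with base cases $k \leq 3$ read from Table \ref{GkSmall}. For the ``$\leq$'' direction I would first establish a structural lemma: every vertex of $V(G_k) \setminus V(G_{k-3})$ has a neighbor in $V(G_{k-3})$. Unwinding the construction, any $v \in U_j$ is adjacent to one ``parent'' vertex in $U_{j-1}$ together with two vertices of the triangular face subdivided at stage $j$, which lie in $V(G_{j-2})$; iterating this for $j \in \{k, k-1, k-2\}$ shows each such $v$ sits within one edge of $V(G_{k-3})$. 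Any geodesic in $G_k$ can then be rerouted through $V(G_{k-3})$ at a cost of at most one extra edge per endpoint, giving $\mathrm{diam}(G_k) \leq \mathrm{diam}(G_{k-3})+2$.

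The matching lower bound is where most of the effort will go, and is the main obstacle. I would choose a diameter-realizing pair $p,q \in V(G_{k-3})$ and identify ``deep'' successor vertices $p^*, q^* \in U_k$ nested inside faces incident to $p$ and $q$ respectively. The delicate point is to rule out shortcuts through $U_{k-2} \cup U_{k-1} \cup U_k$: because every newly added vertex has degree exactly three in its birth generation (its neighbors being the three vertices of the face it subdivides), information cannot propagate across the network faster than the underlying $G_{k-3}$ geometry allows, so any candidate shortcut must actually traverse $V(G_{k-3})$ and pay at least $\mathrm{diam}(G_{k-3})$ edges there. A careful case analysis on how a hypothetical geodesic enters and leaves $V(G_{k-3})$ should force the total length to be at least $\mathrm{diam}(G_{k-3})+2$.

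Given $\mathrm{diam}(G_k) = \lfloor (2k+1)/3 \rfloor$, steps (ii)--(iii) are routine arithmetic: substituting into the Dankelmann bound and splitting into the three residue classes $k \equiv 0,1,2 \pmod 3$ shows that $\lceil (\mathrm{diam}(G_k)+2)/4 \rceil$ simplifies in each case to $\lceil (2k+5)/12 \rceil$; the tiny cases $k \leq 5$ can be read directly from Table \ref{GkSmall}.
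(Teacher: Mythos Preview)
Your proposal is correct and follows essentially the same route as the paper: both establish $\mathrm{diam}(G_k)=\lfloor(2k+1)/3\rfloor=\lceil(2k-1)/3\rceil$ via the two-sided recursion $\mathrm{diam}(G_{k+3})=\mathrm{diam}(G_k)+2$ (the ``$\leq$'' direction from the fact that every vertex has a neighbor in $V(G_{k-3})$, the ``$\geq$'' direction by building deep $U_{k+3}$ descendants of a diametral pair and arguing any geodesic must re-enter $V(G_{k-1})$), and then substitute into the Dankelmann et al.\ bound $\gamma_e^*(G)\geq\lceil(\mathrm{diam}(G)+2)/4\rceil$. One small streamlining the paper uses that you might adopt: rather than case-splitting on $k\bmod 3$ at the end, simply note $\mathrm{diam}(G_k)\geq(2k-1)/3$, whence $(\mathrm{diam}(G_k)+2)/4\geq(2k+5)/12$ and taking ceilings finishes immediately.
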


Before we can prove these theorems, we need some basic results about Apollonian networks.

\section{Apollonian Networks}

All of the vertices in $G_2$ are adjacent to each other, but for larger values of $k$, the adjacencies are more restrictive. Recall that $x$ is a \textit{neighbor} of $y$ in $G$ if $x$ is adjacent to $y$ in $G$, and the set of $y$'s neighbors in $G$ is the \textit{neighborhood} of $y$ in $G$, denoted $N_{G}(y)$.

\begin{lemma}
\label{U_k neighbors}
For all $k \geq 2$, and for every vertex $v$ in $U_k$,\\
(i) $v$ has no neighbor in $U_k$\\
(ii) $v$ has a neighbor in $U_{k-1}$\\
(iii) $v$ has exactly 3 distinct neighbors in $V(G_{k-1})$ and these vertices are also pairwise adjacent.\\
(iv) For all $r<k$ and for all $u \in U_r$, if $u$ is adjacent to $v$ then $|N_{G_k}(u) \cap N_{G_k}(v)|=2$.\\
(v) if $r<k$ and $v$ has more than one neighbor in $U_r$, then $r=1$
\end{lemma}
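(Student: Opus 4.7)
The plan is to prove all five parts simultaneously by induction on $k \geq 2$, since each statement follows almost directly from the recursive construction of $G_k$ together with earlier parts of the lemma. The base case $k = 2$ is immediate: $U_2$ consists of a single vertex $v$, created adjacent to all three vertices of $U_1 = V(G_1)$; those three vertices form a triangle in $G_1$, so parts (i)--(v) hold trivially (note that (v) is vacuous for $r \geq 2$ and is witnessed by the three neighbors in $U_1$).

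For the inductive step, let $k \geq 3$ and $v \in U_k$. By the definition of $G_k$, the vertex $v$ was introduced in the step $G_{k-1} \to G_k$ as a new vertex adjacent to exactly three vertices of $V(G_{k-1})$: some $u \in U_{k-1}$ together with two vertices $x, y$ that form an adjacent pair of neighbors of $u$ in $G_{k-1}$. This immediately gives parts (ii) and (iii), since $\{u, x, y\}$ is pairwise adjacent. Part (i) is likewise a direct consequence of the construction: every new vertex in $U_k$ is defined only to have edges to $V(G_{k-1})$, so no two vertices of $U_k$ can be adjacent in $G_k$. Combining (i) and (iii), $N_{G_k}(v) = \{u, x, y\}$, and no further edges incident to $v$ are introduced because $G_k$ contains no generation beyond $U_k$.

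Part (iv) then collapses to a one-line check: any $u' \in U_r$ adjacent to $v$ must lie in $\{u, x, y\}$, and the remaining two vertices of $\{u, x, y\}$ are both adjacent to $u'$ by the pairwise adjacency established in (iii), so $|N_{G_k}(u') \cap N_{G_k}(v)| = 2$. For part (v), suppose $v$ has two distinct neighbors $a, b \in U_r$ for some $r < k$. Both must lie in $\{u, x, y\}$, so $a$ and $b$ are adjacent; but the inductive hypothesis of part (i) applied at generation $r$ forbids two adjacent vertices in $U_r$ whenever $r \geq 2$. Hence $r = 1$.

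I do not anticipate a serious obstacle. The only point requiring care is the observation that $N_{G_k}(v)$ equals exactly the three defining neighbors of $v$ — this holds because $v \in U_k$ belongs to the last generation in $G_k$, so no subsequent step can add edges incident to $v$ — and it is precisely this fact that powers parts (iv) and (v). The lemma is essentially a bookkeeping exercise that records the local adjacency structure of Apollonian networks for use in the later distance and domination arguments.
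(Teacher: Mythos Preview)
Your proof is correct and follows essentially the same approach as the paper: parts (i)--(iii) are read off directly from the recursive construction, (iv) follows because $N_{G_k}(v)$ is exactly the pairwise-adjacent triple $\{u,x,y\}$, and (v) follows because two neighbors of $v$ in the same generation $U_r$ would have to be adjacent, contradicting (i) at level $r$. The only cosmetic differences are that you frame the argument as an explicit induction on $k$ (whereas the paper appeals to the construction directly and only implicitly invokes (i) at earlier generations), and your argument for (v) avoids the paper's case split on whether $r=k-1$.
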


\begin{proof}
Parts (i), (ii), and (iii) follow directly from the construction of $G_k$ because when a new vertex $v$ is added to $U_k$, it is made adjacent to a vertex $u$ of $U_{k-1}$ and two of $u$'s neighbors in $V(G_{k-1})$, say $n_1$ and $n_2$. By part (iii), if one of $v$'s neighbors is $u$, then the other two are neighbors of both $u$ and $v$, and (iv) follows. We prove (v) by contradiction. Suppose that $1<r<k$ and two of $u$, $n_1$, and $n_2$ are in $U_r$. We know that $u \in U_{k-1}$, so if $k-1=r>1$ then $n_1, n_2 \not\in U_r$ by part (i). If $1<r<k-1$ then it must be that $n_1$ and $n_2$ are the two vertices in $U_r$. But by the construction of $G_{k-1}$, all three of $u$'s neighbors in $G_{k-2}$ (including $n_1$ and $n_2$) must be adjacent. This contradicts (i) for $n_1 \in U_r$ since $r>1$. 
\end{proof}

\begin{corollary}
\label{G_{k-3} neighbors}
For all $k \geq 4$ and for every vertex $ v \in U_k$, $v$ has at least one neighbor in $V(G_{k-3})$.
\end{corollary}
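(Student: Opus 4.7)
The plan is to apply Lemma \ref{U_k neighbors} twice in succession, first to $v$ itself and then to a carefully chosen neighbor of $v$ in $U_{k-1}$. Fix $v \in U_k$ with $k \geq 4$. By parts (ii) and (iii) of Lemma \ref{U_k neighbors}, $v$ has exactly three neighbors in $V(G_{k-1})$, pairwise adjacent, and at least one of them, call it $u$, lies in $U_{k-1}$; denote the other two by $n_1$ and $n_2$. Since $n_1$ and $n_2$ are adjacent to $u \in U_{k-1}$, part (i) of the lemma (applied to $U_{k-1}$) forces $n_1, n_2 \notin U_{k-1}$, so $n_1, n_2 \in V(G_{k-2})$. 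The target neighbor of $v$ will ultimately turn out to be one of these two vertices.

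Next I apply Lemma \ref{U_k neighbors} to $u \in U_{k-1}$, which is legitimate because $k-1 \geq 3$. Part (iii) gives $u$ exactly three neighbors in $V(G_{k-2})$, pairwise adjacent, and part (ii) places one of them, say $p$, in $U_{k-2}$; call the other two $q$ and $r$. Because $n_1$ and $n_2$ are two of the neighbors of $u$ lying in $V(G_{k-2})$, and $u$ has only three such neighbors, we must have $\{n_1, n_2\} \subseteq \{p, q, r\}$. Now $p$, $q$, $r$ are pairwise adjacent with $p \in U_{k-2}$, so part (i) applied to $U_{k-2}$ (valid since $k-2 \geq 2$) forces $q, r \notin U_{k-2}$, whence $q, r \in V(G_{k-3})$.

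Finally, since $\{n_1, n_2\}$ is a two-element subset of the three-element set $\{p, q, r\}$, it cannot coincide with the singleton $\{p\}$, so at least one of $n_1, n_2$ equals $q$ or $r$ and therefore lies in $V(G_{k-3})$. This produces the required neighbor of $v$. The only real bookkeeping concern is that every invocation of Lemma \ref{U_k neighbors} stays within its range of validity ($k \geq 2$), which it does under the standing hypothesis $k \geq 4$; the edge case $k = 4$ is consistent because then $U_{k-2} = U_2$ is a single vertex whose three neighbors in $G_2$ are precisely $V(G_1) = V(G_{k-3})$. I do not anticipate a genuine obstacle beyond tracking the generations carefully.
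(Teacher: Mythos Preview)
Your proof is correct, but it takes a different route from the paper's. The paper argues in one stroke: label the three (pairwise adjacent) neighbors of $v$ by their generations $r\le s\le t\le k-1$; if $r>k-3$ then all three indices lie in $\{k-2,k-1\}$, so by pigeonhole two coincide, contradicting part~(i) of Lemma~\ref{U_k neighbors} since $k-2\ge 2$. Hence $r\le k-3$ and the lowest-generation neighbor already lies in $V(G_{k-3})$.

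Your argument instead performs a two-step descent: you first peel off the $U_{k-1}$ neighbor $u$ to push $n_1,n_2$ into $V(G_{k-2})$, then apply the lemma again to $u$ to identify its neighbor set $\{p,q,r\}$ in $V(G_{k-2})$, and finally use the containment $\{n_1,n_2\}\subseteq\{p,q,r\}$ together with $q,r\in V(G_{k-3})$. This works cleanly and is more constructive---it actually identifies which of $v$'s neighbors lands in $V(G_{k-3})$---but it is longer and requires the extra observation that $u$ has \emph{exactly} three neighbors in $V(G_{k-2})$, so that $\{n_1,n_2\}$ must sit inside $\{p,q,r\}$. The paper's pigeonhole version avoids this bookkeeping entirely.
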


\begin{proof}
By Lemma \ref{U_k neighbors} part (iii), $v$ has exactly 3 distinct neighbors in $V(G_{k-1})$, and these vertices are also pairwise adjacent. Denote these vertices by $n_1$, $n_2$, and $n_3$, and suppose that $n_1 \in U_r$, $n_2 \in U_s$, and $n_3 \in U_t$, where $r \leq s \leq t \leq k-1$. Since $k \geq 4$, then $k-3 \geq 1$ and if $r>k-3$ then by pigeonhole principle, two of $r$, $s$, and $t$ must be equal which contradicts Lemma \ref{U_k neighbors} part (i). Therefore $r \leq k-3$ and $n_1 \in V(G_{k-3})$.
\end{proof}

Given $k \in \mathbb{N}$, $r \leq k$, and $v \in U_r$, define $P_{k}(v)=\{ \{x,y\} \mid x \in U_k \text{ and } v,x, \text{ and } y \text{ are pairwise adjacent}\}$. This is the set of pairs of vertices, at least one of which is from the $k$th generation, that form triangles with $v$ in $G_k$, the very same triangles that will anchor the $(k+1)$st generation of vertices. By the construction of $G_{k+1}$, there is a one-to-one corespondence between $P_{k}(v)$ and the $(k+1)$st generation neighbors of $v$. It follows that $|P_{k}(v)|=|N_{G_{k+1}}(V) \cap U_{k+1}|$, in other words the number of $(k+1)$st generation neighbors of $v$. The next lemma states that the number of such neighbors doubles with every generation.

\begin{lemma}
\label{P recursion}
For all $k \in \mathbb{N}$, for all $r \leq k$, and for all $v \in U_r$, $|P_{k+1}(v)|=2|P_{k}(v)|$.
\end{lemma}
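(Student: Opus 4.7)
The plan is to establish a 2-to-1 correspondence from $P_{k+1}(v)$ onto $P_k(v)$. The guiding observation is that each pair $\{x,y\} \in P_k(v)$ records a triangle $\{v,x,y\}$ in $G_k$ that contains a vertex of $U_k$, and by the construction of $G_{k+1}$ any such triangle anchors exactly one new vertex of $U_{k+1}$; that new vertex in turn contributes exactly two pairs to $P_{k+1}(v)$.

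Concretely, I would fix an arbitrary $\{x,y\} \in P_{k+1}(v)$. Since $r \leq k$, we have $v \notin U_{k+1}$, so by the defining condition on $P_{k+1}(v)$ at least one member of the pair (say $x$) lies in $U_{k+1}$; then Lemma \ref{U_k neighbors}(i) forces $y \in V(G_k)$. By Lemma \ref{U_k neighbors}(iii), $x$ has exactly three neighbors in $V(G_k)$, and these form a triangle $T$ containing a vertex of $U_k$. Since $v$ and $y$ are both adjacent to $x$ in $G_{k+1}$ and both lie in $V(G_k)$, they must belong to $T$, so $T = \{v, u, n\}$ for some $u \in U_k$ and a third vertex $n$. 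Consequently the pair $\{x,y\}$ is completely determined by (i) the triangle $T$ of $G_k$ through $v$ and a $U_k$-vertex, and (ii) the choice of $y$ from the two elements of $T \setminus \{v\}$.

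For the reverse direction, each pair $\{u, n\} \in P_k(v)$ (with $u \in U_k$) names a triangle $T = \{v, u, n\}$ in $G_k$; by construction $T$ anchors a unique $x_T \in U_{k+1}$ adjacent to $v$, $u$, and $n$, and then $\{x_T, u\}$ and $\{x_T, n\}$ are the two resulting elements of $P_{k+1}(v)$. Distinct triangles in $G_k$ anchor distinct vertices of $U_{k+1}$, so the forward map $\{x,y\} \mapsto \{u,n\}$ is well defined and exactly 2-to-1 onto $P_k(v)$, giving $|P_{k+1}(v)| = 2|P_k(v)|$. The main technical point is identifying the anchoring triangle of an arbitrary $x \in U_{k+1}$, but Lemma \ref{U_k neighbors}(iii) pins its $G_k$-neighborhood down immediately, after which the 2-to-1 counting is routine.
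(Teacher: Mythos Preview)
Your argument is correct and follows essentially the same route as the paper's: both proofs identify each pair $\{x,y\}\in P_k(v)$ with the unique $(k{+}1)$st-generation neighbor $z$ of $v$ that it anchors, and then observe that the elements of $P_{k+1}(v)$ are exactly the pairs $\{z,x\}$ and $\{z,y\}$ so obtained. Your write-up is simply a more careful unpacking of the paper's two-sentence version (one small cosmetic point: when $r=k$ the $U_k$-vertex of the anchoring triangle is $v$ itself rather than a member of the pair, so the label ``$u\in U_k$'' should be relaxed there, though this does not affect the counting).
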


\begin{proof}
By the construction of $G_{k+1}$, there is a one-to-one corespondence between $P_{k}(v)$ and the $(k+1)$st generation neighbors of $v$. It follows that the members of $P_{k+1}(v)$ are precisely the pairs $\{z,x\}$ and $\{z,y\}$ where $z \in U_{k+1} \cap N_{G_{k+1}}(v)$ and $\{x,y\} \in P_{k}(v)$.
\end{proof}

\begin{corollary}
\label{P size}
For all $k \in \mathbb{N}$, for all $r \leq k$, and for all $v \in U_r$.

\begin{equation*}
|P_{k} (v)|=
	\begin{cases}
		3(2^{k-r}) &\text{ when $ r > 1 $}\\
		2^{k-1} &\text{  when $ r=1 $}
	\end{cases}
\end{equation*}
\end{corollary}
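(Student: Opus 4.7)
The plan is to prove the formula by induction on $k$ (with $r$ held fixed), using Lemma \ref{P recursion} as the engine for the inductive step and Lemma \ref{U_k neighbors}(iii) to secure the base case. Since Lemma \ref{P recursion} gives $|P_{k+1}(v)|=2|P_{k}(v)|$, both branches of the piecewise formula are simply doubled going from $k$ to $k+1$, so the entire content of the corollary is a good base case plus a one-line induction.

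For the base case I would take $k=r$. When $r=1$, the vertex $v\in U_{1}=V(G_{1})$ sits in the single triangle $G_{1}$, and so the two remaining $U_{1}$-vertices form the unique pair in $P_{1}(v)$, giving $|P_{1}(v)|=1=2^{1-1}$. When $r>1$, Lemma \ref{U_k neighbors}(iii) provides three pairwise adjacent neighbors $n_{1},n_{2},n_{3}\in V(G_{r-1})$ of $v$, and the triangles of $G_{r}$ through $v$ are precisely $\{v,n_{i},n_{j}\}$ for $1\le i<j\le 3$. Because $v\in U_{r}$, each of these three triangles is one of the anchor triangles counted by $P_{r}(v)$, which gives the three distinct pairs $\{n_{1},n_{2}\},\{n_{1},n_{3}\},\{n_{2},n_{3}\}$ and hence $|P_{r}(v)|=3=3\cdot 2^{r-r}$.

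The inductive step is then immediate. Assuming the formula holds at some $k\ge r$, Lemma \ref{P recursion} yields
\[
|P_{k+1}(v)|\;=\;2|P_{k}(v)|\;=\;\begin{cases} 2\cdot 3\cdot 2^{k-r}\;=\;3\cdot 2^{(k+1)-r} & \text{if } r>1,\\[2pt] 2\cdot 2^{k-1}\;=\;2^{(k+1)-1} & \text{if } r=1,\end{cases}
\]
which is exactly the claimed expression at $k+1$. This closes the induction for all $k\ge r$.

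The only subtle point, and the one I would be careful to flag, is the base case: one has to verify that the three neighbors produced by Lemma \ref{U_k neighbors}(iii) really do give three \emph{distinct} pairs forming bona fide triangles with $v$, and that in the $k=r$ case the qualifying generation-$k$ vertex inside each anchor triangle is $v$ itself. Once those are spelled out, no further graph-theoretic work is needed; the rest is bookkeeping driven by the doubling recursion.
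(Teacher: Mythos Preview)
Your proof is correct and follows essentially the same approach as the paper: induction on $k$ driven by Lemma~\ref{P recursion}, with the base cases verified directly. Your organization (taking $k=r$ as the base case for each fixed $r$) is in fact a bit cleaner than the paper's, which treats $k=1$ and $(k,r)=(2,2)$ separately but is otherwise identical.
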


\begin{proof}
We proceed by induction on $k$. If $k=1$ then $r=1$, then indeed for all $v \in U_1$, $|P_1(v)|=1=2^{1-1}$. If $k>1$ and $r=1$ then, by Lemma \ref{P recursion}, $|P_{k}(v)|=2|P_{k-1}(v)|=2(2^{k-2})=2^{k-1}$ by inductive hypothesis. If $k=2$ and $r=2$ then for the single vertex $v \in U_2$, $|P_2(v)|=3=3(2^{1-1})$. If $k>2$ and $r>1$ then, by Lemma \ref{P recursion}, $|P_{k}(v)|=2|P_{k-1}(v)|=2(3(2^{(k-1)-r}))=3(2^{k-r})$ by inductive hypothesis.
\end{proof}

\begin{corollary} 
\label{neighbors in U_k}
For all $k \geq 2$, and for all $v \in V(G_{k-1})$, $v$ has a neighbor in $U_k$.
\end{corollary}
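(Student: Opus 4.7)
The plan is to apply Corollary \ref{P size} directly. Recall that for $v \in U_r$ with $r \leq k-1$, the set $P_{k-1}(v)$ consists of pairs $\{x,y\}$ with $x \in U_{k-1}$ such that $v, x, y$ form a triangle in $G_{k-1}$, and by the construction of $G_k$ these triangles are in one-to-one correspondence with the vertices of $U_k$ adjacent to $v$. In other words, the number of $k$th-generation neighbors of $v$ is exactly $|P_{k-1}(v)|$, so it suffices to show that this quantity is at least $1$.

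First, I would note that every $v \in V(G_{k-1})$ lies in $U_r$ for some unique $r$ with $1 \leq r \leq k-1$. Then I would split into two cases based on Corollary \ref{P size}. If $r = 1$, then $|P_{k-1}(v)| = 2^{k-2}$, which is at least $1$ since $k \geq 2$. If $r > 1$, then $|P_{k-1}(v)| = 3 \cdot 2^{k-1-r}$, and since $r \leq k-1$ the exponent is nonnegative, so this is at least $3$. In either case $|P_{k-1}(v)| \geq 1$, hence $v$ has at least one neighbor in $U_k$.

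Since both cases reduce to an immediate arithmetic check using Corollary \ref{P size}, there is essentially no obstacle; the main thing to be careful about is the indexing (applying $P_{k-1}$ rather than $P_k$, since new neighbors in $U_k$ correspond to triangles anchored in $U_{k-1}$) and explicitly noting that the correspondence between $P_{k-1}(v)$ and the $U_k$-neighbors of $v$ is the one stated in the paragraph preceding Lemma \ref{P recursion}.
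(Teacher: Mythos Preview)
Your proof is correct and follows essentially the same approach as the paper: both invoke the one-to-one correspondence between $P_{k-1}(v)$ and the $U_k$-neighbors of $v$, then appeal to Corollary~\ref{P size} to conclude $|P_{k-1}(v)|>0$. Your version is in fact slightly more careful than the paper's, which merely asserts that $|P_{k-1}(v)|$ is ``nonnegative'' (a minor slip, since positivity is what is needed), whereas you explicitly verify $|P_{k-1}(v)|\geq 1$ in each case.
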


\begin{proof} By the construction of $G_{k}$ there is a one-to-one corespondence between $P_{k-1}(v)$ and the $k$th generation neighbors of $v$. By Corollary \ref{P size} $|P_{k-1}(v)|$ is nonnegative, and therefore $v$ has a neighbor in $U_{k}$.
\end{proof}

\begin{corollary}
\label{neighbors in U_{k-1}}
For all $k \geq 2$, and for all $v \in V(G_k) \setminus U_{k-1}$, $v$ has a neighbor in $U_{k-1}$.
\end{corollary}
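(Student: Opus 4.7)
The plan is to partition $V(G_k) \setminus U_{k-1}$ into two pieces and dispatch each with a result already in hand. Since $V(G_k) = \bigcup_{j=1}^k U_j$ (Remark \ref{Vertices}), we have
\[
V(G_k) \setminus U_{k-1} \;=\; U_k \,\cup\, V(G_{k-2}),
\]
with the convention that $V(G_0) = \emptyset$. So I would take an arbitrary $v \in V(G_k) \setminus U_{k-1}$ and split on which side of this union it lies in.

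For the first case $v \in U_k$, Lemma \ref{U_k neighbors}(ii) directly says that $v$ has a neighbor in $U_{k-1}$, so there is nothing to do. For the second case $v \in V(G_{k-2})$ (which only occurs when $k \geq 3$), I would apply Corollary \ref{neighbors in U_k} with $k$ replaced by $k-1$: since $k-1 \geq 2$ and $v \in V(G_{(k-1)-1})$, that corollary guarantees $v$ has a neighbor in $U_{k-1}$.

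The only remaining loose end is the boundary value $k = 2$. Here $V(G_2) \setminus U_1 = U_2$ consists of a single vertex which, by construction, is adjacent to every vertex of $U_1 = V(G_1)$, so it certainly has a neighbor in $U_1$; alternatively this case is already covered by the first case above since $U_2 \subseteq U_k$ when $k=2$.

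There is no real obstacle: the statement is essentially a bookkeeping corollary, combining the ``downward'' adjacency from Lemma \ref{U_k neighbors}(ii) for $v$ in the top generation with the ``upward'' adjacency from Corollary \ref{neighbors in U_k} for $v$ in deeper generations. The only thing one has to watch is to make sure the case decomposition of $V(G_k) \setminus U_{k-1}$ is complete and that the hypothesis $k-1 \geq 2$ needed to invoke Corollary \ref{neighbors in U_k} is handled (which is why $k = 2$ is treated separately or absorbed into the $U_k$ case).
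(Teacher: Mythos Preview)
Your proposal is correct and follows essentially the same approach as the paper: split into the cases $v \in U_k$ (handled by Lemma~\ref{U_k neighbors}(ii)) and $v \in V(G_{k-2})$ (handled by Corollary~\ref{neighbors in U_k} applied at level $k-1$). Your treatment is slightly more careful about the boundary case $k=2$ and the hypothesis check for Corollary~\ref{neighbors in U_k}, but the argument is the same.
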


\begin{proof}
If $v \in U_k$ then the result follows immediately from Lemma \ref{U_k neighbors} part (ii). If $v \in U_r$, where $r \leq k-2$ then the result follows from Corollary \ref{neighbors in U_k}.
\end{proof}

\begin{lemma}
\label{PvsDegree}
For all $k \in \mathbb{N}$, for all $r \leq k$, and for all $v \in U_r$, 

\begin{equation*}
d_{G_k} (v) = \begin{cases}
|P_k(v)| &\text{ when $ r > 1 $}\\
|P_k(v)| + 1 &\text{  when $ r=1 $}
\end{cases}
\end{equation*}
\end{lemma}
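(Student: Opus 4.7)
The plan is to induct on $k$, with the inductive identity
\[
d_{G_k}(v) \;=\; d_{G_{k-1}}(v) \;+\; |N_{G_k}(v)\cap U_k|
\]
driving everything, since every vertex of $V(G_{k-1})$ keeps all of its old neighbors and gains only new neighbors from $U_k$. A key ingredient is already established in the discussion just before Lemma \ref{P recursion}: the $k$th generation neighbors of $v$ are in bijection with $P_{k-1}(v)$, so $|N_{G_k}(v)\cap U_k| = |P_{k-1}(v)|$ for every $v \in V(G_{k-1})$.

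First I would dispose of the base case $k=r$. When $r=1$, the graph $G_1$ is a triangle, so $d_{G_1}(v)=2$, while the single triangle through $v$ gives $|P_1(v)|=1$, and $d_{G_1}(v)=|P_1(v)|+1$ as required. When $r\ge 2$, Lemma \ref{U_k neighbors}(iii) says $v$ has exactly $3$ neighbors in $G_r$, and Corollary \ref{P size} gives $|P_r(v)|=3\cdot 2^{r-r}=3$, matching the desired formula.

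Next, for the inductive step ($k>r$), I would assume the claim for $k-1$, substitute into the recursive identity, and then apply Lemma \ref{P recursion} to collapse the expression. In the case $r>1$ this gives
\[
d_{G_k}(v)=|P_{k-1}(v)|+|P_{k-1}(v)|=2|P_{k-1}(v)|=|P_k(v)|,
\]
and in the case $r=1$ the extra $+1$ from the inductive hypothesis simply survives:
\[
d_{G_k}(v)=\bigl(|P_{k-1}(v)|+1\bigr)+|P_{k-1}(v)|=2|P_{k-1}(v)|+1=|P_k(v)|+1.
\]

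I do not anticipate a real obstacle; the statement is almost a bookkeeping consequence of two facts the paper has already proved, namely the $P_{k-1}(v)\leftrightarrow (U_k\text{-neighbors of }v)$ correspondence and the doubling in Lemma \ref{P recursion}. The only subtle point is remembering that the $+1$ in the $r=1$ case is structural: it is the ``extra'' adjacency within $G_1$ (namely, the edge between the two other vertices of $U_1$) that does not correspond to a triangle of $v$ anchored in any higher generation, and this is exactly what makes $r=1$ a permanent exception rather than one absorbed by the induction.
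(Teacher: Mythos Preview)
Your proof is correct and follows essentially the same approach as the paper: establish the recursion $d_{G_k}(v)=d_{G_{k-1}}(v)+|P_{k-1}(v)|$ from the bijection between $P_{k-1}(v)$ and the $k$th-generation neighbors of $v$, then induct on $k$ and collapse via Lemma~\ref{P recursion}. Your base-case organization (handling $k=r$ uniformly for all $r$ and invoking Corollary~\ref{P size} when $r\ge 2$) is in fact slightly tidier than the paper's, which only checks $k=r\in\{1,2\}$ explicitly.
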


\begin{proof}
By the construction of $G_{k+1}$, there is a one-to-one corespondence between $P_{k}(v)$ and the $(k+1)$st generation neighbors of $v$. It follows that for all $k \in \mathbb{N}$, for all $r \leq k$, and for all $v \in U_r$, \[d_{G_{k+1}}(v)=d_{G_{k}}(v)+|P_{k}(v)|.\] We now prove the lemma by induction on $k$. If $k=1$ then $r=1$ and $d_{G_{k}}(v)=2=1+1=|P_{k}(v)|+1$. If $k>1$ and $r=1$ then $d_{G_{k}}(v)=d_{G_{k-1}}(v)+|P_{k-1}(v)|=|P_{k-1}(v)|+1+|P_{k-1}(v)|=2|P_{k-1}(v)|+1=|P_{k}(v)|+1$ by inductive hypothesis and Lemma \ref{P recursion}. If $k=2$ and $r=2$ then for the single vertex $v \in U_2$, $d_{G_{k}}(v)=3=|P_{k}(v)|$. If $k>2$ and $r>1$ then $d_{G_{k}}(v)=d_{G_{k-1}}(v)+|P_{k-1}(v)|=|P_{k-1}(v)|+|P_{k-1}(v)|=2|P_{k-1}(v)|=|P_{k}(v)|$ by inductive hypothesis and Lemma \ref{P recursion}.
\end{proof}

\begin{corollary}
For all $k \in \mathbb{N}$, for all $r \leq k$, and for all $v \in U_r$, 

\begin{equation*}
d_{G_k} (v) = \begin{cases}
3(2^{k-r}) &\text{ when $ r > 1 $}\\
2^{k-1} + 1 &\text{  when $ r=1 $}
\end{cases}
\end{equation*}
\end{corollary}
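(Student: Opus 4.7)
The plan is to observe that this corollary is essentially an immediate consequence of the two immediately preceding results: Lemma \ref{PvsDegree}, which relates $d_{G_k}(v)$ to $|P_k(v)|$ (with a correction of $+1$ only in the $r=1$ case), and Corollary \ref{P size}, which gives closed-form expressions for $|P_k(v)|$ in exactly the two cases $r>1$ and $r=1$.

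First I would split into the two cases dictated by these earlier statements. In the case $r>1$, apply Lemma \ref{PvsDegree} to get $d_{G_k}(v) = |P_k(v)|$, and then apply Corollary \ref{P size} to substitute $|P_k(v)| = 3(2^{k-r})$, yielding the desired formula. In the case $r=1$, apply Lemma \ref{PvsDegree} to get $d_{G_k}(v) = |P_k(v)| + 1$, and then apply Corollary \ref{P size} to substitute $|P_k(v)| = 2^{k-1}$, yielding $d_{G_k}(v) = 2^{k-1} + 1$.

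There is essentially no obstacle here: the corollary is a direct combination of two prior results, and the proof reduces to a two-line case analysis with no induction or new combinatorial insight required. The only small care needed is to ensure that the $r=1$ versus $r>1$ split matches between the two source statements (which it does), and to note the boundary case $k=1$, $r=1$ where the formula correctly gives $d_{G_1}(v) = 2^0 + 1 = 2$, matching the fact that vertices of $G_1 = K_3$ have degree $2$.
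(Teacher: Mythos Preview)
Your proposal is correct and matches the paper's own proof, which simply states that the result follows immediately from Corollary~\ref{P size} and Lemma~\ref{PvsDegree}. You have merely spelled out the two-case substitution explicitly, which is fine.
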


\begin{proof}
This follows immediately from Corollary \ref{P size} and Lemma \ref{PvsDegree}.
\end{proof}

\section{Upper Bounds for $\gamma_e^*$}
 
In \cite{Dankelmann} the nonporous exponential dominating number of G, denoted $\gamma_e(G)$, is defined and the following theorem is proved:

\begin{theorem} (Dankelmann, et al) If G is a connected graph of order n, then $\gamma_e(G) \leq \frac{2}{5} (n+2)$.
\end{theorem}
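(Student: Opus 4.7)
The plan is to reduce to the tree case and then use induction. For any spanning tree $T$ of $G$, any nonporous exponential dominating set $S$ for $T$ is also one for $G$: the subgraph of $G$ induced by $V(G) \setminus (S \setminus \{u\})$ contains the corresponding subgraph of $T$ as a spanning subgraph, so $f_G(u,v) \leq f_T(u,v)$ for every $u \in S$ and $v \in V$, and therefore the weight that $S$ sends to each $v$ in $G$ is at least what it sends in $T$. Hence $\gamma_e(G) \leq \gamma_e(T)$, and it suffices to prove the bound when $G$ is a tree.

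For a tree $T$ on $n$ vertices, I would proceed by induction on $n$, verifying small cases directly. For the inductive step, I would root $T$ arbitrarily, choose a deepest leaf $\ell$, and examine the ancestral path $\ell, u_1, u_2, u_3, \ldots$ Because $\ell$ is a deepest leaf, every sibling of $\ell$ is also a leaf and the subtrees hanging off $u_2$ have a severely constrained shape. The strategy is to insert a carefully chosen vertex $s$ (most likely $u_2$ or $u_3$) into the dominating set, delete the portion of $T$ near $\ell$ that $s$ alone already dominates with total weight at least $1$ at every vertex, and apply the inductive hypothesis to the smaller tree $T'$. The accounting is calibrated so that each vertex added to $S$ is paid for by at least $5/2$ deleted vertices on average, which is what produces the constant $\tfrac{2}{5}$ and yields the bound $\gamma_e(T) \leq \tfrac{2}{5}(n+2)$; the additive $+2$ is absorbed in the base cases.

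The main obstacle is the nonporous nature of the weight function. After deleting part of $T$ and applying induction to $T'$, the weight that $s$ contributes to vertices of $T'$ is routed through paths in $T$ that may be blocked by the inductively chosen members of $S \cap V(T')$, and conversely those inductively chosen vertices may themselves rely on paths that once passed through the deleted region. I would handle this either by strengthening the induction hypothesis so that it tracks residual weight available at the attachment point between $T'$ and the deleted region, or by case-analyzing the local shape around $\ell$ so that within each case $T'$ is dominated internally by its own inductively chosen set without cross-contamination. Arranging these local cases to exhaust all possible configurations and verifying that the $5$-to-$2$ trade holds uniformly across them is where I expect the bulk of the work, and also the most delicate bookkeeping, to concentrate.
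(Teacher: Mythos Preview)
The paper does not prove this theorem. It is quoted verbatim from \cite{Dankelmann} and invoked only as a black box, immediately yielding the corollary $\gamma_e^*(G_k) \leq \tfrac{3^{k-1}+9}{5}$ via Remark~\ref{Vertices} and the inequality $\gamma_e^*(G) \leq \gamma_e(G)$. There is therefore no proof in this paper against which to compare your proposal.

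As a side remark on the proposal itself: your spanning-tree reduction is sound, since adding edges can only shorten the relevant $f$-distances and hence only increase the nonporous weight each vertex receives. The inductive scheme on trees that you sketch is the right general shape, and you have correctly identified the genuine difficulty, namely that in the nonporous model the weight contributed by the newly inserted vertex $s$ to $T'$ may be blocked by inductively chosen dominators, and vice versa. Your proposal does not resolve this; it only names two possible strategies (strengthened hypothesis at the attachment point, or exhaustive local case analysis) without carrying either one out. So what you have is a plausible outline rather than a proof, and the delicate part---making the $5$-to-$2$ accounting go through uniformly across all local configurations while respecting the blocking rule---remains entirely to be done.
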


This theorem, together with Remark \ref{Vertices} and the fact that $\gamma_e^*(G) \leq \gamma_e(G)$, immediately establishes the following corollary:

\begin{corollary}
For all $k \in \mathbb{N}$, $\gamma_e^*(G_k) \leq \frac{3^{k-1}+9}{5}$.
\end{corollary}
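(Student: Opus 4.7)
The plan is essentially a one-line chain of inequalities, since all of the ingredients are already in hand. First I would observe that $G_k$ is connected (this is immediate from the recursive construction, since every new vertex is introduced adjacent to an existing vertex), so that Dankelmann's theorem applies. Then I would combine the three stated facts in sequence: $\gamma_e^*(G_k) \leq \gamma_e(G_k)$ by the inequality noted in the introduction, and $\gamma_e(G_k) \leq \tfrac{2}{5}(|V(G_k)|+2)$ by Dankelmann's theorem.

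After that, the only thing left is an arithmetic simplification using $|V(G_k)| = \tfrac{3^{k-1}+5}{2}$ from Remark \ref{Vertices}:
\[
\tfrac{2}{5}\!\left(\tfrac{3^{k-1}+5}{2} + 2\right) \;=\; \tfrac{2}{5} \cdot \tfrac{3^{k-1}+9}{2} \;=\; \tfrac{3^{k-1}+9}{5}.
\]
There is no genuine obstacle here; the result is a direct substitution, and the only judgment call is whether to bother mentioning connectivity of $G_k$ explicitly. I would include a single sentence noting it for completeness before stringing the inequalities together.
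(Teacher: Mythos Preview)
Your proposal is correct and matches the paper's own approach exactly: the paper states that the corollary follows immediately from Dankelmann's theorem, Remark~\ref{Vertices}, and the inequality $\gamma_e^*(G) \leq \gamma_e(G)$, which is precisely the chain you wrote out. Your extra sentence about connectivity is a harmless addition for completeness.
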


The recursive nature of our construction of $G_k$ makes it clear that for, $k>1$, $G_k$ can be conceived as a union of three copies of $G_{k-1}$. More precisely, if we consider the three triangles in $G_2$ that include the vertex in $U_2$, each could be the first generation of a copy of $G_{k-1}$. Together, these three copies of $G_{k-1}$ comprise a copy of $G_k$. This perspective is also discussed in \cite{Zhang2014}. The following lemma follows immediately from this construction.

\begin{lemma}
\label{3recursion}
For all $k \in \mathbb{N}$, $\gamma_e^*(G_{k+1}) \leq 3 \gamma_e^*(G_k)$.
\end{lemma}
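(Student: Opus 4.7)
The plan is to exploit the decomposition of $G_{k+1}$ into three copies of $G_k$ described immediately before the lemma. Taking the three triangles of $G_2$ that contain the $U_2$-vertex as the first generations of three copies of $G_k$, one obtains subgraphs $H_1, H_2, H_3$ of $G_{k+1}$ whose union is $G_{k+1}$, each equipped with a graph isomorphism $\phi_i \colon G_k \to H_i$.

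Let $S$ be a minimum porous exponential dominating set of $G_k$, so $|S| = \gamma_e^*(G_k)$, and set $T = \phi_1(S) \cup \phi_2(S) \cup \phi_3(S) \subseteq V(G_{k+1})$. Clearly $|T| \leq 3|S| = 3\gamma_e^*(G_k)$, so it suffices to show that $T$ is a porous exponential dominating set for $G_{k+1}$. Fix any $v \in V(G_{k+1})$; by the decomposition there is an index $i$ with $v \in V(H_i)$, and because $\phi_i$ is a graph isomorphism, transporting the inequality $w_S^*(\phi_i^{-1}(v)) \geq 1$ across $\phi_i$ yields
\[
\sum_{u \in \phi_i(S)} 2^{\,1 - d_{H_i}(u,v)} \geq 1.
\]

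The remaining step is to replace $H_i$-distances with $G_{k+1}$-distances. Because $H_i$ is a subgraph of $G_{k+1}$, every path in $H_i$ is also a path in $G_{k+1}$, so $d_{G_{k+1}}(u,v) \leq d_{H_i}(u,v)$ for every $u \in V(H_i)$; since each summand $2^{1-d}$ is monotone nonincreasing in $d$, the contribution of $\phi_i(S)$ alone to $w_T^*(v)$ computed in $G_{k+1}$ already reaches $1$, and the nonnegative contributions of $\phi_j(S)$ for $j \neq i$ can only strengthen the inequality.

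The only mild subtlety is that the three images $\phi_i(S)$ may overlap on shared vertices of the $H_i$'s; but such overlaps only make $|T|$ smaller than $3|S|$, which is harmless for an upper bound. I do not expect any serious obstacle: once the three-copy decomposition is in hand, the proof reduces to the monotonicity of the porous weight under distance contractions, and the small-case $k=1$ (where $G_2 = K_4$ is the union of three copies of $G_1 = K_3$ sharing the $U_2$-vertex) fits the same framework.
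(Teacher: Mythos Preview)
Your proposal is correct and is precisely the argument the paper has in mind: the paper states that $G_{k+1}$ decomposes into three copies of $G_k$ anchored at the three triangles of $G_2$ containing the $U_2$-vertex and then asserts the lemma ``follows immediately from this construction,'' giving no further details. Your write-up simply spells out that immediate consequence --- transporting a minimum set $S$ through the three isomorphisms, bounding $|T|\le 3|S|$, and using $d_{G_{k+1}}\le d_{H_i}$ to pass from $H_i$-weights to $G_{k+1}$-weights --- so the approaches coincide.
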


\begin{corollary}
For $k \geq 5$, $\gamma_e^*(G_k) \leq 2(3^{k-5})$.
\end{corollary}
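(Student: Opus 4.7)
The plan is a straightforward induction on $k$ using Remark \ref{gammaG5} as the base case and Lemma \ref{3recursion} for the inductive step. The statement matches the recursion so cleanly that I do not anticipate any real obstacle; the only thing to verify is that the base case aligns exactly with the closed form.

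For the base case $k=5$, Remark \ref{gammaG5} gives $\gamma_e^*(G_5)=2$, and the claimed bound evaluates to $2(3^{5-5})=2(3^0)=2$, so equality holds and the base case is established.

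For the inductive step, suppose $k \geq 5$ and $\gamma_e^*(G_k) \leq 2(3^{k-5})$. Then by Lemma \ref{3recursion},
\[
\gamma_e^*(G_{k+1}) \leq 3\,\gamma_e^*(G_k) \leq 3 \cdot 2(3^{k-5}) = 2(3^{k-4}) = 2(3^{(k+1)-5}),
\]
which is the bound at index $k+1$. This completes the induction, and the corollary follows.
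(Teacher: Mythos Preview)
Your proof is correct and follows exactly the same approach as the paper: induction on $k$ with Remark~\ref{gammaG5} as the base case and Lemma~\ref{3recursion} for the inductive step. The only cosmetic difference is that you step from $k$ to $k+1$ while the paper steps from $k-1$ to $k$.
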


\begin{proof} By induction on $k$. If $k=5$ then the result follows immediately from Remark \ref{gammaG5}. If $k>5$ then by Lemma \ref{3recursion}, $\gamma_e^*(G_k) \leq 3 \gamma_e^*(G_{k-1})=3(2(3^{(k-1)-5}))=2(3^{k-5})$ by inductive hypothesis.
\end{proof}

We now establish a better upper bound by proving Theorem \ref{UBgeq6}:

\begin{proof}
Suppose $k \geq 6$. Let $S=U_{k-3}$ and compute $w_S^*(v)$ for all $v \in V(G_k)$.

\medskip

Case 1: Suppose $v\in V(G_{k-4})$. By Corollary \ref{neighbors in U_k}, $v$ has a neighbor in $S$ and $w^*_S(v) \geq 1$.

\medskip

Case 2: Suppose $v \in U_{k-3}$. Then $v \in S$ and $w^*_S(v) \geq 2$.

\medskip

Case 3: Suppose $v \in U_{k-2}$. By Corollary \ref{neighbors in U_{k-1}}, $v$ has a neighbor in $S$ and $w^*_S(v) \geq 1$.

\medskip

Case 4: Suppose $v \in U_{k-1}$ or $v \in U_{k}$. By Lemma $\ref{U_k neighbors}$, $v$ has three distinct neighbors in $V(G_{k-1})$. If $v$ has a neighbor in $S$ then $w^*_S(v) \geq 1$. Otherwise, at least one of $v$'s neighbors is in $V(G_{k-4})$. Let $n$ be this vertex. By Corollary \ref{P size}, $n$ has more than one neighbor in $S$. Therefore, $v$ is within distance 2 of at least two distinct vertices of  $S$, and $w^*_S(v) \geq 1$.

\medskip

We have shown that $S$ is a porous exponential dominating set for $G_k$. By Remark \ref{Vertices}, $|S|=3^{k-5}$, and therefore $\gamma_e^*(G_k) \leq 3^{k-5}$.
\end{proof}

We proved Theorem \ref{UBgeq6} by using a particular generation as a porous exponential dominating set. For $k \geq 10$, we can improve this upper bound by using the entire vertex set of a smaller Apollonian network as a dominating set. This is the strategy we employ in the proof of Theorem \ref{UBgeq10}:

\begin{proof} Suppose $k \geq 10$. Let $S=V(G_{k-7})$ and compute $w_S^*(v)$ for all $v \in V(G_k)$.

\medskip

Case 1: Suppose $v \in U_j$, $j \leq k-4$. Then by Corollary \ref{G_{k-3} neighbors}, either $v \in S$ or $v$ has a neighbor in $S$. In both cases, $w^*_S(v) \geq 1$.

\medskip

Case 2: Suppose $v \in U_j$, $k-3 \leq j \leq k-2$. If $v$ has a neighbor in $S$, then $w^*_S(v) \geq 1$. Otherwise, by Corollary \ref{G_{k-3} neighbors}, $v$ has a neighbor $n$ in either $U_{k-5}$ or $U_{k-6}$. By Lemma \ref{U_k neighbors}, $n$ has at least two neighbors in $S$. Therefore, $v$ is within distance 2 of at least two distinct vertices of  $S$, and $w^*_S(v) \geq 1$.
 
\medskip

Case 3: Suppose $v \in U_{k-1}$. If $v$ has a neighbor in $S$, then $w^*_S(v) \geq 1$. Otherwise, by Corollary \ref{G_{k-3} neighbors}, $v$ has a neighbor $n$ in $U_{n-4}$, $U_{n-5}$, or $U_{n-6}$. By Corollary \ref{G_{k-3} neighbors}, $n$ has a neighbor $w \in S$. If $w \in U_1$ then $w$ has two neighbors $x,y \in U_1$. Note that $w,x,y \in S$, and that $v$ is within distance 2 of $w$ and within distance 3 of each of $x$ and $y$. Therefore $w^*_S(v) \geq \frac{1}{2} + \frac{1}{4} + \frac{1}{4} \geq 1$. Otherwise $w \in U_j$, $j \geq 2$, and by Lemma \ref{U_k neighbors} $w$ has three distinct neighbors $x_1,x_2,x_3 \in V(G_{j-1})$. Note that $w,x_1,x_2,x_3 \in S$, and that $v$ is within distance 2 of $w$ and within distance 3 of each of $x_1$, $x_2$, and $x_3$. Therefore $w^*_S(v) \geq \frac{1}{2} + \frac{1}{4} + \frac{1}{4} + \frac{1}{4} \geq 1$.

\medskip

Case 4: Suppose $v \in U_{k}$. If $v$ has a neighbor in $S$, then $w^*_S(v) \geq 1$. Otherwise, by Corollary \ref{G_{k-3} neighbors}, $v$ has a neighbor $n$ in $U_{n-3}$, $U_{n-4}$, $U_{n-5}$, or $U_{n-6}$. By Corollary \ref{G_{k-3} neighbors}, $n$ has a neighbor $w$ such that $w \in U_{k-6}$ or $w \in S$. If $w \in S$ then proceed as in Case 3. If $w \in U_{k-6}$ then by Lemma \ref{U_k neighbors} $w$ has three distinct neighbors $x_1,x_2,x_3 \in S$. Let $x_3$ be the neighbor with smallest generation. Since $k \geq 10$, by Corollary \ref{P size} and Lemma \ref{U_k neighbors} part (v), $x_3$ has at least 3 neighbors $y_1,y_2,y_3 \in U_{k-7}$ distinct from $x_1$ and $x_2$. Note that $x_1,x_2,x_3,y_1,y_2,y_3 \in S$. Also note that $v$ is within distance 3 of each of $x_1$, $x_2$, and $x_3$, and within distance 4 of each of $y_1$, $y_2$, and $y_3$. Therefore $w^*_S(v) \geq \frac{1}{4} + \frac{1}{4} + \frac{1}{4} + \frac{1}{8} + \frac{1}{8} + \frac{1}{8}\geq 1$.
 
\medskip

We have shown that $S$ is a porous exponential dominating set for $G_k$. By Remark \ref{Vertices}, $|S|=\frac{3^{k-8}+5}{2}$, and therefore $\gamma_e^*(G_k) \leq \frac{3^{k-8}+5}{2}$.
\end{proof}

\section{Lower Bound for $\gamma_e^*$}

Recall that for a connected graph $G$, the diameter of $G$, denoted $diam(G)$, is the largest possible distance between a pair of vertices in $G$. In \cite{Dankelmann} 
 the nonporous exponential domination number of $G$, denoted $\gamma_e(G)$, is defined and the following theorem is proven: 

\begin{theorem} (Dankelmann, et al)
If $G$ is a connected graph, then 
$\gamma_e(G) \geq \left \lceil \frac{diam(G)+2}{4}\right \rceil$.
\end{theorem}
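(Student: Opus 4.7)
The plan is to prove the inequality by a double-counting argument along a diametral path. Let $d = diam(G)$, fix a pair of vertices $x,y$ with $d(x,y) = d$, and let $P = v_0, v_1, \ldots, v_d$ be a shortest $xy$-path. Let $S$ be any nonporous exponential dominating set. The strategy is to lower-bound $\sum_{i=0}^{d} w_S(v_i)$ using the domination hypothesis and upper-bound the same sum in terms of $|S|$, then read off a bound on $|S|$.

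On the lower-bound side, since $S$ dominates $G$ exponentially, $w_S(v_i) \geq 1$ for every $i$, so
\[
\sum_{i=0}^{d} w_S(v_i) \;\geq\; d+1.
\]
On the upper-bound side, because $f(u,v) \geq d(u,v)$ always, each $u \in S$ contributes at most $2^{1-d(u,v_i)}$ to $w_S(v_i)$, and for $v_i \in S$ the defined weight $2$ is still at most $\sum_{u \in S} 2^{1-d(u,v_i)}$. Swapping the order of summation gives
\[
\sum_{i=0}^{d} w_S(v_i) \;\leq\; \sum_{u \in S}\, \Big(\sum_{i=0}^{d} 2^{1-d(u,v_i)}\Big).
\]
I would then bound each inner sum by a constant $C$, which yields $d+1 \leq C|S|$.

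The per-dominator bound comes from the fact that $i \mapsto d(u,v_i)$ is $1$-Lipschitz by the triangle inequality in $G$. If the minimum $m = \min_i d(u,v_i)$ is attained at index $j^{*}$, then from $d(u,v_i) + d(u, v_{j^{*}}) \geq d(v_i, v_{j^{*}}) = |i - j^{*}|$ we get $d(u, v_i) \geq \max\{m,\, |i - j^{*}| - m\}$. Splitting the sum at $|i - j^{*}| = 2m$ and plugging in this lower bound produces a truncated bilateral geometric series whose value is at most $6$ in the worst case (achieved when $u$ lies on $P$, giving $2 + 2(1 + \tfrac{1}{2} + \tfrac{1}{4} + \cdots) = 6$).

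The main obstacle is sharpening the per-dominator constant from the naive $6$ down to $4$, which is what the stated bound $|S| \geq (d+2)/4$ demands. The missing factor $3/2$ must come from the boundary behaviour of $P$: the endpoints $v_0$ and $v_d$ truncate the bilateral geometric series on one side, so no single $u$ can simultaneously exploit both tails of the series and also be well-placed with respect to both endpoints. I would refine the double counting either by partitioning $S$ into dominators whose closest path vertex lies within a fixed distance of an endpoint and those ``in the middle", handled separately; or by assigning non-uniform path weights $\alpha_i$ to the $v_i$ and optimizing over $\alpha$ to penalize central dominators. Once a real inequality of the form $|S| \geq (d+2)/4$ is obtained, the ceiling $\lceil (d+2)/4 \rceil$ follows immediately from the integrality of $|S|$.
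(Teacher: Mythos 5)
First, a point of comparison: the paper does not prove this statement at all --- it is quoted from reference [3] (Dankelmann et al.) and used as a black box --- so there is no internal proof to measure yours against; I can only assess the proposal on its own terms. On those terms there is a genuine gap, and it is one you have correctly located but misdiagnosed. Your double count gives $d+1 \leq \sum_{i=0}^{d} w_S(v_i) \leq C|S|$, and the best constant obtainable from a per-dominator bound really is $6$, not $4$: a dominator $u=v_j$ lying \emph{on} $P$ far from both endpoints sends the path a total of $2+2\sum_{t\geq 1}2^{1-t}$, which approaches $6$ as the path grows, so such a $u$ exploits both tails of the geometric series in full without needing to be anywhere near an endpoint --- contrary to your claim that no single $u$ can do both. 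Worse, the target inequality $\sum_i w_S(v_i)\leq 4|S|$ is simply false: on a long path $P_n$ with $n\equiv 0 \pmod 4$, taking $S$ to be every fourth vertex gives a valid exponential dominating set (porous or nonporous) with $|S|\approx d/4$, while a direct computation gives $\sum_i w_S(v_i)\approx \tfrac{11}{2}|S|$ in the nonporous case (and $\approx 6|S|$ in the porous case), well above $4|S|$. So no sharpening of the per-dominator constant can close the gap. The slack you need lives on the \emph{other} side of the chain: when a dominator contributes nearly $6$, the path vertices near it receive weight close to $2$ rather than $1$, so the lower bound $\sum_i w_S(v_i)\geq d+1$ is far from tight. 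Extracting that slack requires something like your non-uniform weights $\alpha_i$ (or a different argument entirely), and that step --- which you yourself flag as ``the main obstacle'' --- is the entire content of the theorem and is not carried out.

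Two smaller issues. Even if a constant of $4$ were available, $d+1\leq 4|S|$ yields only $|S|\geq (d+1)/4$, which is weaker than $\lceil (d+2)/4\rceil$ when $d\equiv 3\pmod 4$; you would need a strict inequality somewhere to recover the extra unit. And your derivation of the constant $6$ from $d(u,v_i)\geq \max\{m,|i-j^{*}|-m\}$ with a split at $|i-j^{*}|=2m$ actually produces the bound $(4m+3)2^{1-m}$, which equals $7$ at $m=1$; to certify $6$ you need the sharper observation that, because $P$ is a shortest path, the index set $\{i : d(u,v_i)\leq r\}$ is contained in an interval of at most $2r+1$ indices. In short, the setup is sound and standard, but the route you propose for the decisive step leads to a false inequality and must be replaced.
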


In fact, the proof of this result in \cite{Dankelmann} is sufficient to establish the following lemma: 

\begin{lemma}
\label{DiamBound}
If $G$ is a connected graph, then 
$\gamma_e^*(G) \geq \left \lceil \frac{diam(G)+2}{4}\right \rceil$.
\end{lemma}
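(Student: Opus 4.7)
The plan is to follow Dankelmann et al.\ \cite{Dankelmann}, who established the analogous bound for the nonporous number $\gamma_e$. Their proof uses a diametrical shortest path as a witness, and the same argument carries over to the porous setting once the $S$-avoiding distance $f(s,v)$ from their proof is replaced by the ordinary graph distance $d(s,v)$.

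Concretely, I would fix a porous exponential dominating set $S$, let $d = \mathrm{diam}(G)$, pick vertices $x, y \in V(G)$ with $d(x,y) = d$, and let $u_0 u_1 \cdots u_d$ with $u_0 = x$ and $u_d = y$ be a shortest $xy$-path. Summing $w^*_S(u_i) \ge 1$ along this path and exchanging orders of summation gives
\[
d + 1 \;\le\; \sum_{i=0}^d w^*_S(u_i) \;=\; \sum_{s \in S}\sum_{i=0}^d \frac{1}{2^{d(s, u_i) - 1}}.
\]
The next step is to bound the inner sum per $s$. For each $s \in V(G)$ the sequence $a_i := d(s, u_i)$ satisfies two key metric inequalities: $|a_i - a_{i+1}| \le 1$, because $u_i u_{i+1} \in E(G)$, and $a_i + a_j \ge |i - j|$, because $u_i, u_j$ lie on a shortest path of $G$ (so $d(u_i, u_j) = |i - j|$) and the triangle inequality in $G$ applies. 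These are precisely the ingredients used by Dankelmann et al.\ in the nonporous case; since all of their manipulations rely only on triangle-type inequalities and on the weight hypothesis $\sum_s 2^{1 - \mathrm{dist}(s, v)} \ge 1$, substituting $d$ for $f$ throughout is valid. Their argument then yields a per-$s$ contribution bound giving $|S| \ge (d + 1)/4$, and a short rounding argument (the interesting case being $d \equiv 3 \pmod{4}$, handled by a direct contradiction on the midpoint of $P$) strengthens this to $\lceil (d + 2)/4 \rceil$.

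The hardest part will be establishing the sharp per-$s$ contribution bound. The naive bound using only the Lipschitz constraint is $\sum_i 2^{1 - a_i} \le 6$ (tight in the limit when $s = u_{\lfloor d/2 \rfloor}$), which would yield only $|S| \ge (d+1)/6$---insufficient for the lemma. Achieving the required per-$s$ bound of $4$ forces one to use the constraint $a_i + a_j \ge |i - j|$ in full strength, either via a weighted probe along $P$ in place of the uniform sum, or via Dankelmann et al.'s careful case analysis on where the minimum of $(a_i)$ lies relative to $P$; this is the delicate combinatorial step that must be borrowed intact from their argument.
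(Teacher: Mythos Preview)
Your approach is exactly the paper's: the paper gives no independent argument for this lemma and simply asserts that the proof in Dankelmann et al.\ for $\gamma_e$ already suffices to establish the porous bound, which is precisely your plan of rerunning their diametrical-path argument with the ordinary distance $d$ in place of the $S$-avoiding distance $f$. Your sketch of the summation setup and the per-vertex contribution analysis goes well beyond what the paper records (and correctly flags the sharp per-$s$ bound as the step that must be borrowed intact), but the underlying strategy is identical.
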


We now compute $diam(G_k)$ for every Apollonian network $G_k$.

\begin{lemma}
\label{Diam UB Recursion}
For all $k \in \mathbb{N}$, $diam(G_{k+3}) \leq diam(G_k)+2$.
\end{lemma}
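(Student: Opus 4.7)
The plan is to show that every vertex of $V(G_{k+3})$ lies within distance 1 in $G_{k+3}$ of some vertex of $V(G_k)$, and then route shortest paths through $V(G_k)$, exploiting the fact that $G_k$ is an induced subgraph of $G_{k+3}$ so that any shortest path in $G_k$ remains a path in $G_{k+3}$.

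First I would establish the intermediate claim: for every $v \in V(G_{k+3})$, there exists $v' \in V(G_k)$ with $d_{G_{k+3}}(v,v') \leq 1$. This splits into cases by generation. If $v \in V(G_k)$, take $v' = v$. If $v \in U_{k+1}$, then $k+1 \geq 2$ and Lemma~\ref{U_k neighbors}(iii) gives $v$ three neighbors in $V(G_k)$. For $v \in U_{k+2}$ with $k \geq 2$, Corollary~\ref{G_{k-3} neighbors} supplies a neighbor of $v$ in $V(G_{k-1}) \subseteq V(G_k)$; for $v \in U_{k+3}$ with $k \geq 1$, the same corollary supplies a neighbor in $V(G_k)$. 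The only case not covered by Corollary~\ref{G_{k-3} neighbors} is $v \in U_3$ when $k=1$; here I would argue directly that $v$ has three pairwise-adjacent neighbors in $V(G_2)$ by Lemma~\ref{U_k neighbors}(iii), and since $|U_2|=1$ at least two of them must lie in $V(G_1)$.

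Given the intermediate claim, for arbitrary $u, v \in V(G_{k+3})$ I would pick $u', v' \in V(G_k)$ within distance 1 of $u$ and $v$ in $G_{k+3}$. Because $G_k$ is an induced subgraph of $G_{k+3}$, we have $d_{G_{k+3}}(u',v') \leq d_{G_k}(u',v') \leq diam(G_k)$, and then the triangle inequality yields $d_{G_{k+3}}(u,v) \leq 1 + diam(G_k) + 1 = diam(G_k) + 2$. Since $u$ and $v$ were arbitrary vertices of $G_{k+3}$, this bounds the diameter as claimed.

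The main obstacle is the case analysis in the intermediate claim, in particular the small-$k$ edge case ($k=1$, $v \in U_3$) where Corollary~\ref{G_{k-3} neighbors} has no bite and must be replaced by a direct pigeonhole argument on the three pairwise-adjacent neighbors given by Lemma~\ref{U_k neighbors}(iii). Once the intermediate claim is in hand, the triangle-inequality conclusion is immediate and requires nothing beyond the observation that $G_k$ survives unaltered as a subgraph of $G_{k+3}$.
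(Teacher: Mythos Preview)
Your proposal is correct and follows essentially the same approach as the paper: show that every vertex of $G_{k+3}$ is within distance~$1$ of $V(G_k)$ (via Lemma~\ref{U_k neighbors} and Corollary~\ref{G_{k-3} neighbors}) and then apply the triangle inequality through $G_k$. You are actually more careful than the paper in singling out and handling the $k=1$, $v\in U_3$ edge case, which the paper's terse citation of Lemma~\ref{U_k neighbors} and Corollary~\ref{G_{k-3} neighbors} glosses over.
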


\begin{proof}
Suppose $x,y \in V(G_{k+3})$ and $d_{G_{k+3}}(x,y)=diam(G_{k+3})$. By Lemma \ref{U_k neighbors} and Corollary \ref{G_{k-3} neighbors}, we know that $x$ and $y$ have neighbors $u$ and $v$, respectively, in $V(G_k)$. It follows that \[diam(G_{k+3})=d_{G_{k+3}}(x,y) \leq d_{G_{k}}(u,v)+2 \leq diam(G_{k})+2.\]
\end{proof}

\begin{corollary}
\label{Diam UB}
For all $k \in \mathbb{N}$, 
$diam(G_k) \leq \left \lfloor{\frac{2k+1}{3}}\right \rfloor$.
\end{corollary}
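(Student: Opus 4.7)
The statement is tailor-made for a strong induction that steps by three, since the previous lemma (Lemma \ref{Diam UB Recursion}) already delivers $diam(G_{k+3}) \le diam(G_k)+2$. My plan is therefore to verify the three base cases $k=1,2,3$ directly and then close the induction by simply plugging the recursion into the claimed bound.

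For the base cases, I would appeal to the small-case data already recorded in Table \ref{GkSmall}: $diam(G_1)=1$, $diam(G_2)=1$, $diam(G_3)=2$, which match $\lfloor 3/3\rfloor=1$, $\lfloor 5/3\rfloor=1$, and $\lfloor 7/3\rfloor=2$ respectively. (If one wishes to be fully self-contained, $G_1=K_3$ and $G_2=K_4$ are visibly of diameter $1$, and every vertex of $G_3$ is adjacent to the single vertex in $U_2$, so $diam(G_3)\le 2$; the fact that $G_3$ is not complete gives $diam(G_3)=2$.)

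For the inductive step, assume the bound holds for some $k\ge 1$. By Lemma \ref{Diam UB Recursion},
\[
diam(G_{k+3}) \;\le\; diam(G_k)+2 \;\le\; \left\lfloor\frac{2k+1}{3}\right\rfloor + 2 \;=\; \left\lfloor\frac{2k+1}{3}+2\right\rfloor \;=\; \left\lfloor\frac{2(k+3)+1}{3}\right\rfloor,
\]
which is exactly the desired bound with $k$ replaced by $k+3$. Because the base cases cover all three residue classes modulo $3$, this completes the induction.

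There is no real obstacle here; the content of the corollary is entirely in Lemma \ref{Diam UB Recursion}, and the only care needed is to ensure the three starting values of $k$ are handled so that the step-of-three recursion reaches every natural number. I do not need any new facts about Apollonian networks for this proof.
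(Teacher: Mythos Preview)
Your proposal is correct and follows essentially the same approach as the paper: both verify the base cases $k=1,2,3$ directly and then use Lemma~\ref{Diam UB Recursion} to run an induction with step size three. The only cosmetic difference is that the paper first proves the real-valued inequality $diam(G_k)\le \frac{2k+1}{3}$ and then applies the floor at the end, whereas you carry the floor through the induction; both variants are equally valid.
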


\begin{proof}
We proceed by induction on $k$ and show that $diam(G_k) \leq \frac{2k+1}{3}$. For $k=1,2,3$, it is easy to verify that $diam(G_k)=1,1,2$, respectively, and establish the desired result. For $k>3$, by Lemma \ref{Diam UB Recursion}, $diam(G_k) \leq diam(G_{k-3})+2 \leq \frac{2(k-3)+1}{3}+2=\frac{2k+1}{3}$, by inductive hypothesis. Since $diam(G_k)$ is an integer, the result follows.
\end{proof}

\begin{lemma}
\label{Uk diam path}
For all $k \in \mathbb{N}$ there exists $x,y \in U_k$ such that $d_{G_{k}}(x,y)=diam(G_k)$.
\end{lemma}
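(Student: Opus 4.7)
The plan is to start with any pair $a, b \in V(G_k)$ realizing $d_{G_k}(a, b) = D := diam(G_k)$, and then to swap each of $a$ and $b$, if it is not already in $U_k$, for a neighbor in $U_k$ without decreasing the distance. The base case $k = 1$ is immediate since $V(G_1) = U_1$, and the case $k = 2$ is a degenerate one in which $|U_2| = 1$; I would treat the main argument as applying for $k \geq 3$, which also guarantees $D \geq 2$.

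The central ingredient is the following \emph{replacement claim}: whenever $p, q \in V(G_k)$ satisfy $d_{G_k}(p, q) = D$ and $p \notin U_k$, every neighbor $p' \in U_k$ of $p$ (one exists by Corollary \ref{neighbors in U_k}) satisfies $d_{G_k}(p', q) = D$. To prove this I would use Lemma \ref{U_k neighbors} (iii) to write $N_{G_k}(p') = \{p, n_1, n_2\} \subseteq V(G_{k-1})$, with all three pairwise adjacent in $G_k$. Since $n_1 \sim p$ and $n_2 \sim p$, the triangle inequality yields $d(n_i, q) \geq D - 1$ for $i \in \{1,2\}$. The diameter bound gives $d(p', q) \leq D$; if equality failed, then $d(p', q) = D - 1$, which would force the second vertex on some shortest path from $p'$ to $q$ (necessarily a member of $\{p, n_1, n_2\}$) to lie at distance $D - 2$ from $q$. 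But $d(p, q) = D$ and $d(n_i, q) \geq D - 1$, a contradiction.

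With this claim in hand, I would apply it first with $(p, q) = (a, b)$ (skipping the step if $a \in U_k$) to obtain $a' \in U_k$ with $d(a', b) = D$, and then apply it a second time with $(p, q) = (b, a')$ to obtain $b' \in U_k$ with $d(a', b') = D$. Because $D \geq 2$ and no two vertices of $U_k$ are adjacent by Lemma \ref{U_k neighbors} (i), the resulting pair $a', b'$ is automatically distinct. The real work is the replacement claim itself, which is just the triangle-inequality computation above once the pairwise-adjacent neighborhood structure from Lemma \ref{U_k neighbors} (iii) is in hand; the only awkward bit is the degenerate case $k = 2$.
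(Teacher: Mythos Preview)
Your approach is essentially the paper's: take a diametral pair and replace each endpoint not already in $U_k$ by a $U_k$-neighbor, exploiting Lemma~\ref{U_k neighbors}(iii) to see that the three neighbors of any $U_k$-vertex form a triangle containing the original endpoint. The paper performs both swaps at once and then rewrites the endpoints of a shortest $x$--$y$ path back to $u,v$; your one-at-a-time replacement with the explicit $D-2$ contradiction is a little tidier and sidesteps the edge case in the paper's version where the second vertex of $P$ already equals the original endpoint.

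Your unease about $k=2$ is well founded and deserves to be stated outright rather than merely flagged as ``awkward'': the lemma as written is \emph{false} for $k=2$, since $|U_2|=1$ forces $x=y$ and hence $d_{G_2}(x,y)=0\ne 1=diam(G_2)$. The paper's proof shares this defect---it claims to cover all $k\ge 2$, but with only one vertex available in $U_2$ the construction of $P$ and $Q$ collapses. Fortunately the only downstream use is in Lemma~\ref{Diam LB Recursion}, where small $k$ can be checked directly from Table~\ref{GkSmall}; but strictly speaking both your argument and the paper's establish the claim only for $k\in\mathbb{N}\setminus\{2\}$.
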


\begin{proof}
First, observe that the statement is true for $k=1$, so we may assume $k \geq 2$. Let $u,v \in V(G_{k})$ such that $d_{G_{k}}(u,v)=diam(G_k)$. If $u \in U_k$ then let $x=u$. Otherwise, by Corollary \ref{neighbors in U_k}, there exists $x \in U_k$ such that $x$ is adjacent to $u$. If $v \in U_k$ then let $y=v$. Otherwise, by Corollary \ref{neighbors in U_k}, there exists $y \in U_k$ such that $y$ is adjacent to $v$. Let $P$ be a shortest path joining $x$ and $y$. Let $w_1$ be the vertex adjacent to $x$ in $P$, and $w_2$ be the vertex adjacent to $y$ in $P$. By Lemma \ref{U_k neighbors} part (iii), $u$ is adjacent to $w_1$ and $v$ is adjacent to $w_2$. Define $Q$ to be the path formed by replacing $x$ and $y$ in $P$ with $u$ and $v$. Then the length of $Q$ is the same as the length of $P$. Since $d_{G_{k}}(u,v)=diam(G_k)$, this shows that the length of $P$ is at least $diam(G_k)$. Since $P$ is a shortest path joining $x$ and $y$, $d_{G_{k}}(x,y)=diam(G_k)$.
\end{proof}

\begin{lemma}
\label{Diam LB Recursion}
For all $k \in \mathbb{N}$, $diam(G_{k+3}) \geq diam(G_k)+2$.
\end{lemma}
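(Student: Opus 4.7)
The plan is to leverage two structural properties of Apollonian networks: distances between vertices of $V(G_k)$ are preserved in $G_{k+3}$, and each triangle of $G_{k-1}$ acts as a separator in $G_{k+3}$. Given a diameter-realizing pair in $G_k$, these properties will let us construct a pair in $G_{k+3}$ at distance $diam(G_k) + 2$. I would first dispatch the small cases $k \in \{1,2\}$ by direct inspection of Table \ref{GkSmall}. For $k \geq 3$, by Lemma \ref{Uk diam path} pick $x, y \in U_k$ with $d_{G_k}(x, y) = diam(G_k) =: D$, and by Lemma \ref{U_k neighbors}(iii) write their neighbors in $V(G_{k-1})$ as $\{n_1, n_2, n_3\}$ and $\{m_1, m_2, m_3\}$; these bound distinct faces $T_x, T_y$ of $G_{k-1}$ in which $x, y$ were inserted.

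Two auxiliary facts are needed. \emph{Distance preservation}: $d_{G_{k+3}}(u, w) = d_{G_k}(u, w)$ for $u, w \in V(G_k)$. To see this, in any shortest $G_{k+3}$-path a vertex $v$ of generation $g > k$ has its two path-neighbors among its three anchor vertices in $V(G_{g-1})$, which by Lemma \ref{U_k neighbors} are pairwise adjacent; the length-$2$ subpath through $v$ can then be replaced by the single edge between those two distinct anchors, strictly shortening the path and contradicting shortness, so the shortest path actually stays in $V(G_k)$. \emph{Separator property}: every path in $G_{k+3}$ from the strict interior of $T_x$ to a vertex outside $T_x$ passes through some $n_\alpha$. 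This follows by induction on the generation index: each newly added vertex inside $T_x$ inherits its three anchor neighbors from the closure of the sub-triangle in which it was placed, so no edge of $G_{k+3}$ bridges the interior of $T_x$ directly to its exterior.

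For the construction, let $u \in U_{k+1}$ be the unique vertex inside the sub-triangle $\{x, n_i, n_j\}$ (so $u$ is adjacent to $x, n_i, n_j$); let $w \in U_{k+2}$ be the unique vertex inside $\{x, u, n_i\}$ (adjacent to $x, u, n_i$); and let $x^* \in U_{k+3}$ be the vertex inside the triangle $\{x, u, w\}$ of $G_{k+2}$. All three anchors of $x^*$ lie strictly inside $T_x$, so $x^*$ is non-adjacent to every $n_\alpha$ and $d_{G_{k+3}}(x^*, n_\alpha) = 2$, witnessed by $x^* \to x \to n_\alpha$. A symmetric construction produces $y^* \in U_{k+3}$ with $d_{G_{k+3}}(y^*, m_\beta) = 2$ for every $\beta$. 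By the separator property, every shortest $x^*$-$y^*$ path has the form $x^* \to \cdots \to n_\alpha \to \cdots \to m_\beta \to \cdots \to y^*$ for some $\alpha, \beta$, so
\[
d_{G_{k+3}}(x^*, y^*) \;\geq\; \min_{\alpha, \beta} \bigl[\, 2 + d_{G_k}(n_\alpha, m_\beta) + 2 \,\bigr] \;=\; 4 + (D-2) \;=\; D + 2,
\]
where distance preservation replaces $d_{G_{k+3}}(n_\alpha, m_\beta)$ by $d_{G_k}(n_\alpha, m_\beta)$, and the fact that every shortest $x$-$y$ path in $G_k$ exits $x$ via some $n_\alpha$ and enters $y$ via some $m_\beta$ (combined with the triangle inequality) yields $\min_{\alpha, \beta} d_{G_k}(n_\alpha, m_\beta) = D - 2$.

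The main obstacle is the rigorous combinatorial proof of the separator property. Planarity makes the claim visually obvious, but a clean inductive argument is required, tracking for each generation which face of $G_{k-1}$ each new vertex belongs to and invoking Lemma \ref{U_k neighbors}(iii) together with the construction rule to rule out any cross-boundary adjacencies.
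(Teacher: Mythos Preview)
Your construction is exactly the paper's: the paper also picks a diameter-realizing pair $u,v\in U_k$ (your $x,y$) and builds a vertex in $U_{k+3}$ (your $x^*$, the paper's $x$) anchored to $u$, a $U_{k+1}$-neighbor of $u$, and a $U_{k+2}$-neighbor of both, so that this vertex has no neighbor in $V(G_{k-1})$; likewise on the $v$ side. For the distance bound the paper proceeds slightly differently: it takes a shortest $x^*$-$y^*$ path, locates its first and last vertices $w_i,w_j$ lying in $V(G_{k-1})$, argues directly that $u$ is adjacent to $w_i$ and $v$ to $w_j$ (using that $u$ is adjacent to every $V(G_{k-1})$-neighbor of $u_1$ and $u_2$), and then replaces the two ends to obtain a $u$-$v$ walk at least two edges shorter than the original path. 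Your separator-plus-distance-preservation framing establishes the same conclusion while making the needed structural facts explicit; your identification of the separator claim as the step requiring a careful inductive proof is accurate.

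Two small points to tighten. In your distance-preservation sketch you must take $v$ of \emph{maximum} generation on the path; for an arbitrary vertex of generation $g>k$ the two path-neighbors could lie in later generations and hence need not be among the three anchors. In the final displayed inequality you are implicitly using that on a shortest $x^*$-$y^*$ path the first $n_\alpha$ precedes the last $m_\beta$; this follows from applying your separator property once to $T_x$ and once to $T_y$ (so that no $m_\beta$ can appear before the first exit from $T_x$), but it deserves a sentence.
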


\begin{proof}
The result is easily seen to be true for $k=1$, so we may assume that $k \geq 2$. (See Figure \ref{fig:G4} and Table \ref{GkSmall}.) By Lemma \ref{Uk diam path}, let $u,v \in V(G_{k})$ such that $d_{G_{k}}(u,v)=diam(G_{k})$. By Lemma \ref{U_k neighbors}, any path joining $u$ and $v$ must include vertices from $V(G_{k-1})$. By Corollary \ref{neighbors in U_k}, $u$ has a neighbor $u_1$ in $U_{k+1}$. By the construction of $G_{k+2}$, $u$ and $u_1$ have a common neighbor $u_2$ in $U_{k+2}$. By the construction of $G_{k+3}$, $u$, $u_1$, and $u_2$ have a common neighbor $x$ in $U_{k+3}$. By Lemma \ref{U_k neighbors}, $u$, $u_1$, and $u_2$ are the only neighbors of $x$ in $G_{k+3}$. Therefore, $u$ has a neighbor $x \in U_{k+3}$ such that $N_{G_{k+3}}(x) \cap V(G_{k-1}) = \emptyset$. An analogous argument shows that $v$ has a neighbor $y \in U_{k+3}$ such that $N_{G_{k+3}}(y) \cap V(G_{k-1}) = \emptyset$. Note that any path joining $x$ and $y$ must include vertices from $V(G_{k-1})$ because otherwise we could construct a path joining $u$ and $v$ without such vertices, which contradicts our earlier claim to the contrary.

Let $P$ be a shortest path $x,w_1,w_2,\ldots,w_m,y$ joining $x$ and $y$. Choose $i$ as small as possible and $j$ as large as possible such that $w_i,w_j \in V(G_{k-1}) \cap V(P)$. Since the only neighbors of $x$ are $u$, $u_1$, and $u_2$ then $w_i$ is adjacent to at least one of these. By the construction of $G_{k+1}$ and $G_{k+2}$, $u$ is adjacent to all of the neighbors of $u_1$ and $u_2$ in $V(G_{k-1})$, and therefore $u$ is adjacent to $w_i$. Analogously, $v$ is adjacent to $w_j$. Let $Q$ be the path $u,w_i,w_{i+1},\ldots,w_{j-1},w_j,v$ joining $u$ and $v$. Since $N_{G_{k+3}}(x) \cap V(G_{k-1}) = \emptyset$ and $N_{G_{k+3}}(y) \cap V(G_{k-1}) = \emptyset$, the length of $P$ is at least 2 more than the length of $Q$. It follows that the length of $P$ is at least $diam(G_k)+2$. Since $P$ is a shortest length path joining $x$ and $y$, $diam(G_{k+3}) \geq diam(G_k)+2$.
\end{proof}

Together, Lemma \ref{Diam UB Recursion} and Lemma \ref{Diam LB Recursion} imply the following result which was stated in \cite{Zhang2006} with greater generality but without a complete proof.

\begin{corollary}
\label{Diam Exact Recursion}
For all $k \in \mathbb{N}$, $diam(G_{k+3}) = diam(G_k)+2$.
\end{corollary}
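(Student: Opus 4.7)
The statement is an immediate consequence of the two lemmas that precede it in the section. My plan is to simply combine Lemma \ref{Diam UB Recursion}, which asserts that $diam(G_{k+3}) \leq diam(G_k)+2$, with Lemma \ref{Diam LB Recursion}, which asserts that $diam(G_{k+3}) \geq diam(G_k)+2$. Since the value $diam(G_{k+3})$ is both at most and at least $diam(G_k)+2$, equality is forced. No induction or case analysis is needed at this stage; all of that work was done in establishing the two recursive inequalities.

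There is essentially no obstacle here — the ``main obstacle'' has already been overcome in the proof of Lemma \ref{Diam LB Recursion}, where one had to produce, from a diametrical pair $u,v \in U_k$, a new pair $x,y \in U_{k+3}$ whose distance is at least $2$ more. The only thing I would want to double-check in writing the proof is that both lemmas hold for every $k \in \mathbb{N}$ with no hidden base case gap, so that the combined equality genuinely holds for all $k$. A quick scan of the hypotheses of the two lemmas confirms this (Lemma \ref{Diam LB Recursion} handles $k=1$ explicitly via inspection and then assumes $k\geq 2$, while Lemma \ref{Diam UB Recursion} is stated for all $k \in \mathbb{N}$).

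Thus the proof can be written in a single sentence: by Lemmas \ref{Diam UB Recursion} and \ref{Diam LB Recursion}, $diam(G_{k+3}) \leq diam(G_k)+2 \leq diam(G_{k+3})$, so $diam(G_{k+3}) = diam(G_k)+2$.
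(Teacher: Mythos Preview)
Your proposal is correct and matches the paper's approach exactly: the paper simply states that Lemmas \ref{Diam UB Recursion} and \ref{Diam LB Recursion} together imply the corollary, without writing out any further argument. Your one-line combination of the two inequalities is precisely what is intended.
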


\begin{corollary}
\label{Diam LB}
For all $k \in \mathbb{N}$, 
$diam(G_k) \geq \left \lceil{\frac{2k-1}{3}}\right \rceil$.
\end{corollary}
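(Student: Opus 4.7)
The plan is to proceed by induction on $k$ in strides of three, leveraging the lower-bound recurrence already established in Lemma \ref{Diam LB Recursion} (or equivalently Corollary \ref{Diam Exact Recursion}). Since the hard combinatorial work has already been done in proving that $diam(G_{k+3}) \geq diam(G_k) + 2$, this corollary should reduce to a bookkeeping argument with ceilings, together with the verification of three base cases.

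For the base cases I would handle $k \in \{1, 2, 3\}$ directly, reading $diam(G_1) = 1$, $diam(G_2) = 1$, and $diam(G_3) = 2$ from Table \ref{GkSmall} (or by inspection: $G_1$ and $G_2$ are complete graphs, and in $G_3$ every vertex is within distance $2$ of every other via the central vertex of $U_2$). Then one checks that $\lceil (2k-1)/3 \rceil$ equals $1, 1, 2$ respectively, so the inequality holds at each base.

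For the inductive step, assume $k \geq 4$ and that the bound holds at $k - 3$. Apply Lemma \ref{Diam LB Recursion} and then the inductive hypothesis to obtain
\[
diam(G_k) \;\geq\; diam(G_{k-3}) + 2 \;\geq\; \left\lceil \frac{2(k-3)-1}{3} \right\rceil + 2 \;=\; \left\lceil \frac{2k-7}{3} + 2 \right\rceil \;=\; \left\lceil \frac{2k-1}{3} \right\rceil,
\]
using the fact that adding an integer commutes with the ceiling. This finishes the induction.

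The only thing to be a little careful about is aligning the base cases with the stride of the recurrence: because the recurrence advances $k$ by $3$, we genuinely need all three values $k = 1, 2, 3$ as bases to seed the three residue classes modulo $3$. Beyond that caveat there is no real obstacle, and the proof is essentially immediate from the recurrence and a one-line arithmetic manipulation of the ceiling.
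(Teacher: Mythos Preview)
Your proof is correct and follows essentially the same approach as the paper: induction in strides of three with base cases $k=1,2,3$ and Lemma~\ref{Diam LB Recursion} for the step. The only cosmetic difference is that the paper carries the inequality $diam(G_k)\geq\frac{2k-1}{3}$ without the ceiling through the induction and then invokes integrality of the diameter at the end, whereas you keep the ceiling throughout; both are fine.
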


\begin{proof}
We proceed by induction on $k$ and show that $diam(G_k) \geq \frac{2k-1}{3}$. For $k=1,2,3$, it is easy to verify that $diam(G_k)=1,1,2$, respectively, and establish the desired result. For $k>3$, by Lemma \ref{Diam LB Recursion}, $diam(G_k) \geq diam(G_{k-3})+2 \geq \frac{2(k-3)-1}{3}+2=\frac{2k-1}{3}$, by inductive hypothesis. Since $diam(G_k)$ is an integer, the result follows.
\end{proof}

\begin{corollary}
\label{Diam Exact}
For all $k \in \mathbb{N}$, 
$diam(G_k) = \left \lceil{\frac{2k-1}{3}}\right \rceil$.
\end{corollary}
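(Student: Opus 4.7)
The plan is to sandwich $diam(G_k)$ between the upper bound from Corollary \ref{Diam UB} and the lower bound from Corollary \ref{Diam LB}, and then observe that the two bounds coincide for every $k$. Concretely, the chain
\[
\left\lceil \frac{2k-1}{3} \right\rceil \;\leq\; diam(G_k) \;\leq\; \left\lfloor \frac{2k+1}{3} \right\rfloor
\]
already follows from the two previous corollaries, so all that remains is to verify the purely arithmetic identity
\[
\left\lfloor \frac{2k+1}{3} \right\rfloor = \left\lceil \frac{2k-1}{3} \right\rceil
\]
for every positive integer $k$.

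I would prove this identity by a three-case analysis on $k \pmod 3$. Writing $k=3m$, $k=3m+1$, and $k=3m+2$ respectively, each case reduces to a one-line evaluation: for $k=3m$ both sides equal $2m$, for $k=3m+1$ both sides equal $2m+1$, and for $k=3m+2$ both sides again equal $2m+1$. Since these three cases exhaust $\mathbb{N}$, the squeeze is tight and $diam(G_k)$ must equal the common value $\lceil (2k-1)/3 \rceil$.

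I do not anticipate any real obstacle here; the work has already been done in Corollaries \ref{Diam UB} and \ref{Diam LB}, and the mod-$3$ case check is elementary. The only mild subtlety is making sure the statement is phrased with $\lceil (2k-1)/3 \rceil$ rather than $\lfloor (2k+1)/3 \rfloor$ (either is correct by the identity above, but the stated form matches the lower bound). If desired, the case analysis can be condensed to a single sentence by noting that consecutive integers of the form $2k-1$ and $2k+1$ differ by $2$, so in each residue class modulo $3$ they straddle at most one multiple of $3$, forcing the floor and ceiling expressions to agree.
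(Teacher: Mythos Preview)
Your proposal is correct and matches the paper's own proof essentially verbatim: the paper also invokes Corollaries \ref{Diam UB} and \ref{Diam LB} to obtain the sandwich and then verifies $\lfloor (2k+1)/3 \rfloor = \lceil (2k-1)/3 \rceil$ by the same three-case check on $k \pmod 3$. Your explicit evaluations in each residue class are accurate.
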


\begin{proof}
This result follows easily from Corollary \ref{Diam UB}, Corollary \ref{Diam LB}, and the fact that
$ \left \lfloor{\frac{2k+1}{3}}\right \rfloor = \left \lceil{\frac{2k-1}{3}}\right \rceil $, which the reader can easily check by cases $k \equiv 0,1,2$ (mod 3).
\end{proof}

We can now prove Theorem \ref{LB}:

\begin{proof}
By Corollary \ref{Diam Exact}, $diam(G_k) = \left \lceil{\frac{2k-1}{3}}\right \rceil  \geq {\frac{2k-1}{3}}$, and therefore  $\frac{diam(G_k)+2}{4} \geq \frac{2k +5}{12} $. By Lemma \ref{DiamBound},
$\gamma_e^*(G_k) \geq \left \lceil \frac{diam(G_k)+2}{4}\right \rceil \geq \left \lceil \frac{2k +5}{12} \right \rceil$.
\end{proof}

\section{Acknowledgements}

The authors would like to thank CURM, the Center for Undergraduate Research, for facilitating a wonderful undergraduate research experience and funding our research through NSF grant DMS-1148695. We would also like to thank Concord University for their encouragement and financial support.


\footnotesize{

} 

\end{document}